\documentclass[12pt]{amsart}

\usepackage{amsmath, graphicx, cite, enumerate, comment}
\usepackage{amssymb, amsthm, amscd}
\usepackage{latexsym}
\usepackage{amssymb}
\usepackage[T1]{fontenc}	
\usepackage[english]{babel}
\usepackage{amsmath,amssymb}
\usepackage{amsfonts}
\usepackage{amscd}
\usepackage{cite}
\usepackage{marvosym}
\usepackage{mathrsfs}
\usepackage{amsthm}
\usepackage{hyperref}
\usepackage{verbatim}

\usepackage{hyperref}
\hypersetup{
	bookmarks=true,         % show bookmarks bar?
	unicode=false,          % non-Latin characters in AcrobatÃÂ¢ÃÂÃÂs bookmarks
	pdftoolbar=true,        % show AcrobatÃÂ¢ÃÂÃÂs toolbar?
	pdfmenubar=true,        % show AcrobatÃÂ¢ÃÂÃÂs menu?
	pdffitwindow=false,     % window fit to page when opened
	pdfstartview={FitH},    % fits the width of the page to the window
	pdftitle={My title},    % title
	pdfauthor={Author},     % author
	pdfsubject={Subject},   % subject of the document
	pdfcreator={Creator},   % creator of the document
	pdfproducer={Producer}, % producer of the document
	pdfkeywords={keyword1, key2, key3}, % list of keywords
	pdfnewwindow=true,      % links in new PDF window
	colorlinks=true,       % false: boxed links; true: colored links
	linkcolor=black,          % color of internal links (change box color with linkbordercolor)
	citecolor=blue,        % color of links to bibliography
	filecolor=magenta,      % color of file links
	urlcolor=cyan           % color of external links
}

\usepackage{tikz} \usetikzlibrary{arrows,shapes,patterns} % AC: You need this command for everything below to work
\usepackage{lipsum} % for sample

\headheight=6.15pt
\textheight=8.75in
\textwidth=6.5in
\oddsidemargin=0in
\evensidemargin=0in
\topmargin=0in

\newtheorem{theorem}{Theorem}[section]

\newtheorem{lemma}[theorem]{Lemma}
\newtheorem{proposition}[theorem]{Proposition}

\theoremstyle{definition}

\newtheorem{example}[theorem]{Example}

\theoremstyle{remark}
\newtheorem{remark}[theorem]{Remark}

\theoremstyle{remark}

\theoremstyle{definition}

\numberwithin{equation}{section}

\newcommand{\R}{\mathbb{R}}
\newcommand{\de}{\delta}

\newcommand{\s}{\sigma}

\newcommand{\cL}{\mathcal{L}}

\newcommand{\cQ}{\mathcal{Q}}

\newcommand{\be}{\begin{equation}}
\newcommand{\ee}{\end{equation}}
\newcommand{\bd}{\begin{displaymath}}
\newcommand{\ed}{\end{displaymath}}

            \begin{document}
            
            	\thanks{\textsl{Mathematics Subject Classification (MSC 2010):} Primary 53A10; Secondary 83C99, 49Q05.}
            	\title[Non-existence of minimal catenoids in af spaces]
            	{A non-existence result for minimal catenoids in asymptotically flat spaces}
            	\author{Alessandro Carlotto}
            	\address{ETH - Institute for Theoretical Studies  \\
            		ETH \\
            		Z\"urich, Switzerland}
            	\email{alessandro.carlotto@eth-its.ethz.ch}
            	\author{Andrea Mondino}
            	\address{Institut f\"ur Mathematik \\
            		Universit\"at Z\"urich \\
            		Z\"urich, Switzerland}
            	
            	\email{andrea.mondino@math.uzh.ch}           
            	
            	\begin{abstract} We show that asymptotically Schwarzschildean 3-manifolds cannot contain minimal surfaces obtained by perturbative deformations of a Euclidean catenoid, no matter how small the ADM mass of the ambient space and how large the neck of the catenoid itself. Such an obstruction is sharply three-dimensional and ceases to hold for more general classes of asymptotically flat data.
            	\end{abstract}
            	
            	\maketitle
            	
            	   	\section{Introduction}
            	
            	In general relativity, asymptotically flat manifolds naturally arise as spacelike slices in space-times describing isolated gravitational systems. Due to their self-evident physical relevance, their geometry has been extensively studied and a variety of fundamental results have been obtained. Among these, the mean-curvature proof of the \textsl{Positive Mass Theorem} proposed by Schoen and Yau \cite{SY79} has disclosed a fundamental principle, which basically asserts that an asymptotically Schwarzschildean manifold (of non-negative scalar curvature, as prescribed by the dominant energy condition) cannot contain an asymptotically planar stable minimal surface. In fact, much more recently, the following non-existence result has been obtained by the first-named author:
            	
            	\begin{theorem}
            		\label{thm:ne1}\cite{Car14a}	
            		Let $(M,g)$ be an asymptotically Schwarzschildean 3-manifold of non-negative scalar curvature. If it contains a complete, properly embedded stable minimal surface $\Sigma$, then $\left(M,g\right)$ is isometric to the Euclidean space $\R^{3}$ and $\Sigma$ is an affine plane. 	
            	\end{theorem}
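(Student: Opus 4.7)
The strategy is to combine the second variation inequality with the Gauss equation (in the spirit of Schoen--Yau), a Cohn--Vossen/Gauss--Bonnet computation on $\Sigma$, and the rigidity case of the Positive Mass Theorem (PMT).

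First, I would establish that every complete, properly embedded stable minimal $\Sigma$ in an asymptotically Schwarzschildean end has quadratic area growth and finite total curvature: stability together with the Schoen--Simon--Yau curvature estimates, combined with the fast decay of the ambient Riemann tensor, suffice. Huber's theorem then identifies $\Sigma$ with a finite-type Riemann surface, and each end is a graph (in the Euclidean chart at infinity) over an affine plane with quantitative $C^k$ decay.

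Second, using $H\equiv 0$, the Gauss equation gives $\mathrm{Ric}_M(\nu,\nu)=\tfrac{1}{2}\bigl(R_M-R_\Sigma-|A|^2\bigr)$, so the stability inequality can be rewritten as
\[
\int_\Sigma \bigl(|A|^2+R_M\bigr)\varphi^2\,d\sigma \;\le\; \int_\Sigma R_\Sigma\,\varphi^2\,d\sigma + 2\int_\Sigma|\nabla\varphi|^2\,d\sigma
\]
for every $\varphi\in C_c^\infty(\Sigma)$. Choosing logarithmic cutoffs $\varphi_R\to 1$, the quadratic area growth forces the Dirichlet term to vanish in the limit, and Cohn--Vossen applied on the intrinsic side yields
\[
\int_\Sigma |A|^2\,d\sigma + \int_\Sigma R_M\,d\sigma \;\le\; 4\pi\chi(\Sigma) - 2\sum_{\text{ends}}\mu_e,
\]
with $\mu_e$ the asymptotic angular defect at each end.

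Finally, I would argue that $\Sigma$ has genus zero and a single end (otherwise the above is already violated), so $\chi(\Sigma)=1$ and each $\mu_e=0$; a careful boundary-term analysis along the ideal boundary at infinity, in the same spirit as the Schoen--Yau proof of the PMT, then identifies the residual deficit with a positive multiple of the ADM mass $m$, giving $m\le 0$. Since $R_M\ge 0$, the PMT gives $m\ge 0$, hence $m=0$, and its rigidity clause forces $(M,g)\cong(\R^3,\delta)$; equality in the stability inequality finally forces $|A|\equiv 0$, so $\Sigma$ is an affine plane. The principal obstacle I foresee is the mass-extraction step: one must ensure that the asymptotically planar decay of $\Sigma$ from the first step is quantitatively strong enough (in an appropriate weighted $C^k$ sense) to reproduce the ADM surface integral in the limit. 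The remaining ingredients---Schoen--Simon--Yau curvature estimates, Huber's theorem, Cohn--Vossen, and PMT rigidity---are by now classical.
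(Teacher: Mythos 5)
This theorem is not proved in the present paper at all: it is quoted from \cite{Car14a}, so there is no internal proof to compare against. Measured against the argument actually given in that reference, your outline follows essentially the same route: Schoen-type curvature estimates for stable minimal surfaces plus the decay of the ambient geometry give quadratic area growth, finite total curvature and a graphical expansion of each end; the Schoen--Yau rearrangement of the stability inequality with logarithmic cutoffs is combined with Gauss--Bonnet/Cohn--Vossen; a mass term is extracted from the boundary contribution at infinity; and the Positive Mass Theorem with its rigidity clause closes the argument. So the strategy is the right one, but two points need repair.

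First, a bookkeeping error that, as written, breaks the conclusion. For a graphical (planar or logarithmic) end the Cohn--Vossen contribution $\lim_{R\to\infty}\int_{C_R}k_g$ equals $2\pi$, not $0$; with $k$ ends and genus $g_\Sigma$ the inequality should read
\[
\int_\Sigma\bigl(|A|^2+R_M\bigr)\,d\sigma\;\le\;4\pi\chi(\Sigma)-4\pi k\;=\;8\pi\bigl(1-g_\Sigma-k\bigr)\;\le\;0,
\]
with equality precisely when $g_\Sigma=0$ and $k=1$; this is what forces $|A|\equiv 0$ and $R_M\equiv 0$ along $\Sigma$ and rules out higher genus or additional ends. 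With your normalization ($\chi=1$, $\mu_e=0$) the right-hand side is $4\pi$, nothing follows, and in particular a surface with two planar ends is not excluded. Second, the step you yourself flag as the obstacle is the actual content of the proof and cannot be left as a gesture: identifying the residual deficit with a positive multiple of $m$ requires first upgrading the decay of each end to $p(x')=a\log|x'|+b+O_2(|x'|^{-1+\varepsilon})$ (the expansion recalled in Remark \ref{rem:comparBR15} of the present paper) before the length and geodesic-curvature integrals can be expanded to the order at which the mass appears; only then does one obtain $m\le 0$, and PMT rigidity finishes. A last, smaller remark: the statement is meant for non-compact $\Sigma$ (a closed stable minimal sphere, e.g.\ a horizon, is not touched by your argument), so either assume unboundedness or dispose of the compact case separately.
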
	  
            	
            	We refer readers to Subsection \ref{subs:as} for a precise definition of asymptotically Schwarzschildean spaces, but for the sake of this introduction they may simply consider them a suitably isotropic subclass of asymptotically flat manifolds. \\
            	\indent It is worth noting that the properness assumption is inessential (and thus can be removed) when $(M,g)$ has an horizon boundary, as was shown in \cite{CCE15}. Even more importantly, it is appropriate to recall from \cite{EM12} that these rigidity results for stable minimal surfaces have important consequences on the behaviour of isoperimetric domains for large enclosed volume (see \cite{EM13a, EM13b} and \cite{MFN15} for a general picture of the isoperimetric structure of asymptotically Schwarzschildean spaces, see also \cite{MN12} for isoperimetric properties of locally asymptotically flat spaces of non-negative Ricci curvature). In fact, using these tools it was shown in Corollary 1.11 in \cite{CCE15} that (under a natural non-drifting assumption) any large volume-preserving stable constant mean curvature sphere has to be a leaf of the canonical Huisken-Yau foliation \cite{HY96}, thus essentially a perturbed sphere centered at the center of mass of the ambient manifold. Lastly, theorems of this type can be used to classify asymptotically flat static manifolds (Corollary 1.9 in \cite{CCE15}) by virtue of the area-minimizing properties of the zero set of the corresponding potentials (see \cite{GM14}). \\

            	   	\indent From a different perspective, the theorem above ensures that (in this setting) the most basic mechanism to generate complete (non-compact) minimal surfaces, namely taking limits of solutions of the Plateau problem for diverging boundaries, is inevitably doomed to fail. It is then a natural question whether some other constructive procedure has chances to succeed, or more generally whether asymptotically Schwarzschildean manifolds contain (necessarily unstable) unbounded minimal surfaces. \\

            	   	\indent In trying to prove an existence result via perturbation methods, there is a class of Euclidean minimal surfaces which are by far the most natural to deform, namely catenoids. The reason is twofold: on the one hand such surfaces blow-down to a double plane (so that, in asymptotically flat spaces, they have very small mean curvature far away from the origin) and, perhaps more importantly, they come in one-parameter families parametrized by the neck-parameter, so that choosing that parameter to be sufficiently large we can work in a region where the ambient metric is uniformly close to flat. In other words, the effect of gravity can be made arbitrarily small. Hence, one can try to construct minimal catenoids in asymptotically Schwarzschildean spaces by first performing a Lyapunov-Schmidt reduction (thereby constructing normal graphs over Euclidean catenoids whose defining functions belong to suitable weighted Sobolev spaces) and then handling the cokernel, generated by Euclidean isometries and by the scaling parameter, by exploiting the freedom on the choice of the center, of the symmetry axis and of the radius of the the neck itself. In this article, we prove that this approach is also doomed to fail, and that in fact the ADM mass obstructs the existence of minimal catenoids, even when the function defining the normal perturbation is (potentially) allowed to decay very slowly, the mass is very small and the neck is very large. \\

            	   	Let then $(M^3,g^{m,e_m})$ be a one-parameter family of asymptotically Schwarzschildean manifolds\footnote{Our non-existence result is patently local with respect to a given end of the ambient manifold $M$, hence we can (and we shall) assume that $M$ has only one end, and in fact we can reduce to the case of a one-parameter family of asymptotically Schwarzschildean metrics on $\mathbb{R}^3\setminus B_{r_0}$.} (see Subsection \ref{subs:as}) such that $g^{0,e_0}$ is the flat Euclidean metric on some exterior domain of $\mathbb{R}^3$ and assume, correspondingly, that $(S_m)$ is a $C^1-$arc of surfaces emanating from the catenoid $\textbf{Cat}_c$ of neck $c\geq c_0$. When proving a non-existence result for an arc of \textsl{minimal} surfaces, we need to make sure that the asymptotic conditions we prescribe on each $S_m$ are not too restrictive and do not determine fictitious obstructions. Significantly enough, our main theorem concerns the class of surfaces that can be parametrized as normal graphs (over some Euclidean catenoid) by means of a defining function $\Omega$ which is solely required to be $C^2-$bounded in the cylindrical coordinates $(u,v)\in S^1\times\mathbb{R}$ (see Subsection \ref{subs:catgr}) and such that $\Omega(u,v)\to 0$ as one lets $v\to\pm\infty$. Hence, \textsl{no a priori assumption on the rate of decay  is made}. \\

            	    \indent We let $F_c:S^1\times\mathbb{R}\to\mathbb{R}^3$ denote the standard local parametrization of the catenoid of neck $c$ and vertical axis (see equation \eqref{eq:catc}), by $\textbf{n}$ the unit normal vector field defined by \eqref{eq:defn} and we set $F^{\Omega}_{c}=F_c+\textbf{n}\Omega$ (the normal graph over the catenoid of neck $c$ whose defining function is $\Omega$, see equation \eqref{eq:norgraph}).
            	    Lastly, the precise definition of the functional spaces $\mathscr{E}^k$ and $\mathscr{C}^2_{b}$ is provided in Section \ref{sec:setup}.\\

            	    \indent Here is the statement of our main result:

            	  \begin{theorem}\label{thm:main}
            	   	 		Given $r_0>0$ and $e_{(\cdot)}:[0,\varepsilon_0]\to \mathscr{E}^2$ a continuous curve differentiable at $0$ with $e_0=0$, let us consider the manifold with boundary $M=\R^3 \setminus B_{r_0}$ endowed with the one-parameter family of asymptotically Schwarzschildean metrics 
            	   	 		\[
            	   	 		g^{m,e_m}_{ij}:=\left(1+\frac{m}{2|x|} \right)^4 \delta_{ij}+(e_m(x))_{ij}.
            	   	 		\]
            	   	 		Then there exists $\bar{c}\geq r_0$ for which the following conclusion holds: it is not possible to find 
            	   	 		\begin{itemize}
            	   	 			\item a continuous curve $\Omega_{(\cdot)}:[0,\varepsilon_0] \to \mathscr{C}^2_b(S^1\times \R)$  differentiable at $0$, with $\Omega_0=0$, and satisfying $\Omega_{m}(u,v)=o(1)$ as $v\to\pm\infty$ for each $m\in [0, \varepsilon_0]$,
            	   	 			\item sequences $m_i\downarrow 0$, $0<m_i<\varepsilon_0$,   and $c_i=c(m_i)\geq \bar{c}$,
            	   	 		\end{itemize}
            	   	 		such that  the perturbed catenoidal immersions $F^{\Omega_{m_i}}_{c_i}$ are  minimal in $(\R^3 \setminus B_{r_0}, g^{m_i,e_{m_i}})$.
            	   	 \end{theorem}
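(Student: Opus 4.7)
The plan is to linearize the minimality condition $H^{m, e_m}(F^{\Omega_m}_{c(m)}) = 0$ at $m=0$, restrict to the axisymmetric Fourier mode, and extract a Fredholm-type obstruction which survives once the catenoid neck $c$ is sufficiently large. For simplicity of exposition, assume $c_i \to c_0$ along a subsequence; the general case reduces to this by the rescaling invariance described below. Using the conformal formula $\tilde H = \varphi^{-2}(H + 4\partial_\nu \log\varphi)$ for $\varphi_m := 1 + m/(2|x|)$, and that $F_{c_0}$ is Euclidean-minimal, the Schwarzschild contribution to the metric variation of the mean curvature at $m=0$ equals $-2\,\langle\mathbf{n},x\rangle/|x|^3$. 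The normal component of the surface variation is $\dot c_0 \phi_c + \dot\Omega_0$, with the scaling Jacobi field $\phi_c := \langle\mathbf{n},\partial_c F_{c_0}\rangle = 1 - (v/c_0)\tanh(v/c_0)$ satisfying $L_{c_0}\phi_c = 0$ by scaling invariance of minimality; hence $\dot c_0$ drops out and we obtain
\[
L_{c_0}\dot\Omega_0 \;=\; -\frac{2\,\langle\mathbf{n},x\rangle}{|x|^3} + \mathcal{E}(\dot e_0) \quad \text{on } F_{c_0},
\]
with $L_{c_0} = \Delta_{\Sigma_0} + |A|^2$ the Jacobi operator and $\mathcal{E}(\dot e_0)$ a bounded linear expression in $\dot e_0$ and its first two derivatives along the catenoid.

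I next project onto the $u$-independent mode, which carries all of the axisymmetric Schwarzschild source. In the rescaled variable $t := v/c_0$, after multiplying through by $c_0^2\cosh^2 t$, the restricted equation becomes the $c_0$-independent ODE
\[
f''(t) + 2\,\mathrm{sech}^2(t)\, f(t) \;=\; g_0(t) + g_E(t), \qquad g_0(t) := -\frac{2\,(1 - t\tanh t)\cosh^2 t}{(\cosh^2 t + t^2)^{3/2}}.
\]
The homogeneous equation has the fundamental pair $u_1(t) := 1 - t\tanh t$ (even, asymptotic to $-|t|$) and $u_2(t) := -\tanh t$ (odd, bounded, the vertical translation Jacobi field), with Wronskian $-1$. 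Variation of parameters shows: (i) bounded solutions exist, since the oddness of $u_2\,g_0$ lets the unbounded $u_1$-coefficient be tuned to vanish at both infinities; (ii) every bounded solution has uniform limits $A_\pm := \lim_{t\to\pm\infty}f(t)$ satisfying
\[
A_+ + A_- \;=\; 2\,J(e), \qquad J(e) := \int_{\R} u_1(t)\bigl(g_0(t) + g_E(t)\bigr)\,dt.
\]
Therefore, imposing $A_\pm = 0$ (the decay of $\dot\Omega_0$ at both ends, as forced by the hypothesis) reduces to the single scalar integral condition $J(e) = 0$.

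I then show that $J(e) \neq 0$ whenever $c_0$ is large, which is what the choice of $\bar c$ will exploit. The pure Schwarzschild contribution
\[
J_0 \;:=\; \int_{\R} u_1\,g_0\,dt \;=\; -2\int_{\R} \frac{(1-t\tanh t)^2 \cosh^2 t}{(\cosh^2 t + t^2)^{3/2}}\,dt
\]
is an explicit, strictly negative, $c_0$-independent number. By definition of $\mathscr{E}^2$, the tensor $\dot e_0$ decays strictly faster than the Schwarzschild factor together with its first two derivatives, say like $|x|^{-1-\alpha}$ for some $\alpha > 0$. Rescaling $x = c_0 \hat x$ and evaluating along the catenoid gives $|g_E(t)| \leq C\,\|\dot e_0\|_{\mathscr{E}^2}\,c_0^{-\alpha}$ pointwise with exponential decay in $|t|$, whence $|J_E| \leq C'\,\|\dot e_0\|_{\mathscr{E}^2}\,c_0^{-\alpha}$. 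Picking $\bar c$ so that $C'\,\|\dot e_0\|_{\mathscr{E}^2}\,\bar c^{-\alpha} < |J_0|/2$ then ensures $J(e) \neq 0$ for every $c_0 \geq \bar c$.

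The subtlest step (and the main potential pitfall) is bridging the pointwise decay $\Omega_m(u,v) \to 0$ as $v\to\pm\infty$ with the required decay of the linearization $\dot\Omega_0$, since a priori we only know $\dot\Omega_0 \in \mathscr{C}^2_b$. The ODE analysis above produces uniform (in $u$) limits $A_\pm$ for the axisymmetric part of $\dot\Omega_0$; combined with the $\mathscr{C}^2_b$-differentiability $\Omega_m = m\,\dot\Omega_0 + o_{\mathscr{C}^2_b}(m)$ and the hypothesis $\Omega_m(u,v) \to 0$ for each fixed $m$, a two-step $\varepsilon$-argument (fix $\varepsilon > 0$, pick $V$ so that the axisymmetric part of $\dot\Omega_0$ is within $\varepsilon$ of $A_\pm$ for $|v| > V$; let $v \to \pm\infty$ at fixed $m$ to bound $|A_\pm| \leq \varepsilon + \|\Omega_m/m - \dot\Omega_0\|_\infty$; finally $m \to 0$) forces $A_\pm = 0$. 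Together with $J(e) \neq 0$ from the previous paragraph, this yields the contradiction; the crucial quantitative ingredient throughout is the $c_0^{-\alpha}$ smallness of $J_E$ relative to the $c_0$-independent main term $J_0$, which is what allows the obstruction to be uniform for $c \geq \bar c$.
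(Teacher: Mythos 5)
Your proposal is essentially correct, and it reaches the paper's conclusion by a genuinely different route, although both arguments ultimately rest on the same obstruction: the Schwarzschild source is not orthogonal to the dilation Jacobi field $\psi_0(v)=1-v\tanh(v)$ (your $u_1$), and the resulting pairing $J_0=-2\int(1-v\tanh v)^2\cosh^2(v)\,|\cosh^2(v)+v^2|^{-3/2}dv$ is exactly (up to normalization) the constant $A$ of \eqref{eq:defA}. The paper works at the \emph{nonlinear} level: it multiplies the full equation $H^{g^{m,e_m}}_{F^{\Omega_m}_c}\equiv 0$ by $f^2\psi_0$, integrates, kills the linear term by integration by parts (Lemma \ref{lem:IntParts}), bounds the quadratic remainder by $Cm^2$ via $\|\Omega_m\|_{\mathscr{C}^2_b}\leq Cm$, and obtains $Am\leq C(m^2+m/c)$. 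The price is Lemma \ref{lem:expansion}: the pointwise hypothesis $\Omega_m(u,v)=o(1)$ must be upgraded to $\Omega_m\in\mathscr{C}^2_{(-1)^+}$ before the integration by parts is legitimate. You instead differentiate at $m=0$, project onto the axisymmetric Fourier mode, and read the obstruction off the variation-of-parameters formula for the resulting ODE: the existence of a bounded solution with vanishing limits $A_\pm$ forces $\int u_1(g_0+g_E)=0$, which fails for $c\geq\bar c$. This sidesteps any integration by parts against $\Omega_m$ itself (the decay you need is that of the \emph{source}, which is automatic), and your two-step $\varepsilon$-argument correctly transfers the pointwise decay of each $\Omega_m$ to the limits $A_\pm$ of the axisymmetric part of $\dot\Omega_0$ via $\Omega_m=m\dot\Omega_0+o_{\mathscr{C}^0_b}(m)$. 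Minor slips ($A_++A_-=J$ rather than $2J$ with your Wronskian normalization; $\alpha=1$ for $\mathscr{E}^2$) do not affect the conclusion.

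The one point that needs genuine repair is your treatment of the neck parameter. The hypothesis only provides a \emph{sequence} $c_i=c(m_i)\geq\bar c$: it need not converge (it may tend to $+\infty$), and there is no differentiable function $c(m)$, so the quantity ``$\dot c_0$'' and the step ``$\dot c_0$ drops out because $L_{c_0}\phi_c=0$'' are not meaningful as written; likewise the reduction ``assume $c_i\to c_0$ by rescaling'' is not innocuous, since rescaling alters the mass and the differentiability structure of the curves $e_{(\cdot)},\Omega_{(\cdot)}$. The fix is to linearize around $F_{c_i}$ for each $i$ rather than around a fixed $F_{c_0}$: after multiplying by $c_i^2\cosh^2(v)$ the reduced operator $\partial_u^2+\partial_v^2+2\cosh^{-2}(v)$ and the Schwarzschild source $g_0$ are exactly independent of $c_i$, while the $e$-contribution is $O(\bar c^{-1})$ uniformly, so the difference quotients $\Omega_{m_i}/m_i\to\dot\Omega_0$ still yield a single limiting ODE to which your Fredholm analysis applies. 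With that adjustment the argument closes.
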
	 
            
            		We shall add three important remarks to the statement of this theorem. 
            		
            		\begin{remark}\label{rem:higherdim}
            				Such an obstruction ceases to hold when the ambient manifold has dimension at least four. For instance, if one considers radially symmetric
            				asymptotically flat manifolds it is possible to show (by a rather direct application of the implicit function theorem) that given any minimal $n$-catenoid in $\mathbb{R}^{n+1}$ (with $n\geq 3$) there exists a family of normal perturbations whose defining function is exponentially decreasing along the two ends and whose mean curvature is zero (see Proposition \ref{pro:highdim} for a precise statement). For the sake of completeness a proof of this assertion is provided in Section \ref{sec:app}. Notice that this result does not involve any curvature condition. We certainly do not expect such symmetry assumption to be necessary in order to prove a perturbative existence theorem and have made it for the sole sake of simplicity, since our result suffices for the purposes of showing the absence of obstructions in the higher-dimensional scenario.
            			            		\end{remark}	
            		
            		\begin{remark}\label{rem:exotic}
            				When one allows the ambient metric \textsl{not} to be asymptotically Schwarzschildean (thereby allowing highly an-isotropic asymptotics) the conclusion of Theorem \ref{thm:main} is no longer true. Indeed, the first-named author and R. Schoen constructed in \cite{CS14} asymptotically flat manifolds that have positive ADM mass but are \textsl{exactly flat} on a half-space, with the immediate consequence that such half-space is foliated by stable minimal surfaces. Furthermore, the same general localization scheme allows to produce data which are non-trivial outside a cone of small aperture, and hence those manifolds will contain \textsl{Euclidean} minimal catenoids so that in the statement of Theorem \ref{thm:main} the proviso of Schwarzschildean asymptotics is certainly necessary for the conclusion to hold. Further comments concerning the precise rate of decay of the ambient metric and the range of validity of our main theorem are provided in Subsection \ref{subs:as} (see, more specifically, Remark \ref{rem:ratedecay}).
            		\end{remark}

            		\begin{remark}\label{rem:ScalNonNeg}
            			By the  rigidity in the Positive Mass  Theorem of Schoen-Yau \cite{SY79}, if we assume  $e_0\in \mathcal{E}^2$ to be such that  the metric $g^{0,e_0}_{ij}:=\delta_{ij}+(e_0(x))_{ij}$ has non negative scalar curvature, then necessarily  $e_0=0$.  We remind the reader that the assumption $\textrm{Scal}\geq 0$ is rather customary in general relativity as it corresponds to the dominant energy condition for time-symmetric data, so that the assumption $e_0=0$ in Theorem \ref{thm:main} is  very natural.
            		\end{remark}
            		
            		Furthermore, it is important to observe that Theorem \ref{thm:main} obstructs not only the construction of catenoidal minimal surfaces by direct applications of the Implicit Function Theorem, but also (and more generally) by a Lyapunov-Schmidt reduction argument (that is to say: handling the cokernel). Indeed, suppose that for $e_{(\cdot)}:[0,\varepsilon_0]\to \mathscr{E}^2$  as above there exist three continuous maps:
            		\[
            		\Omega_{(\cdot)}:[0,\varepsilon_0] \to \mathscr{C}^2_b(S^1\times \R), \ \zeta: [0,\varepsilon_0]\to\mathbb{R}^3, \rho: [0,\varepsilon_0]\to SO(2)
            		\]
            		such that (in fixed coordinates $\left\{x\right\}$) the normal graph defined by $\Omega_m$ over the catenoid of center $\zeta_m$ and axis the image via $\rho_m$ of the $x^3-$axis is minimal. Then one could simply consider the new family of metrics
            		\[
            		\tilde{g}^{m,e_m}(y):=g^{m,e_m}(\zeta_m+\rho_m(y))
            		\] 
            		which are readily checked to be of the form
            		\[
            		\tilde{g}_{ij}^{m,e_m}(y)=\left(1+\frac{m}{2|y|}\right)^4\delta_{ij}+\tilde{e}_m(y)_{ij}
            		\]
            		with $\tilde{e}_m\in\mathscr{E}^2$ and hence directly apply Theorem \ref{thm:main} to get a contradiction.
            		Of course, we have tacitly used here the fact that the Jacobi fields of $\textbf{Cat}_c$ are those generated by Euclidean isometries plus the dilations (see Section \ref{sec:app}) and notice that our non-existence theorem allows for some freedom in the choice of the neck of the catenoid in question.\\

            		\indent As a result, Theorem \ref{thm:main} asserts the failure of a wide class of perturbative methods to \textsl{minimally} deform Euclidean catenoids, such conclusion being rather sharp both with respect to the asymptotic structure of the ambient Riemannian manifold and with respect to the dimension of the manifold itself. This result suggests that the landscape of minimal surfaces in asymptotically Schwarzschildean spaces differs dramatically from its Euclidean counterpart, with much more severe restrictions and limitations that are still to be fully understood.
            		
            	\

               	\textsl{Acknowledgments}. The first-named author would like to thank Richard Schoen for a number of enlightening conversations on themes related to the large-scale structure of asymptotically flat spaces and minimal surfaces therein, Rafe Mazzeo for important clarifications and Jan Metzger for his interest in this work. The authors also wish to express their sincere gratitude to the anonymous referee for carefully reading this article and for preparing a detailed report, which resulted in a significantly improved version of the paper.\\
            	\indent This article was done while  A. C. was an ETH-ITS fellow: the outstanding support of Dr. Max R\"ossler, of the Walter Haefner Foundation and of the ETH Z\"urich Foundation are gratefully acknowledged. \\
            	\indent Part  of the work has been  developed while A. M. was lecturer at the  Institut f\"ur Mathematik at the  Universit\"at Z\"urich and the project was finalized while he was in residence at the Mathematical Science Research Institute in Berkeley, during the spring semester 2016, supported by the National Science Foundation under the Grant DMS-1440140. He wishes to express his gratitude to both institutes for the stimulating atmosphere and the excellent working conditions. 
            	
            	\
            	\
            	
            	\section{Setting and recollections}\label{sec:setup}
            	
            	\subsection{Asymptotically Schwarzschildean data} \label{subs:as}
            	
            		We shall say that a complete, Riemannian $(n+1)$-manifold $(M,g)$ is (one-ended) asymptotically Schwarzschildean if there exists a compact set $K\subset M$ such that $M\setminus K$ is diffeomorphic to $\mathbb{R}^{n+1}\setminus B_{r_{0}}$ (for some constant $r_0>0$) and in the coordinate system $\left\{x\right\}$ induced by such diffeomorphism the metric takes the form
            		\begin{equation}\label{eq:defgme}
            		g^{m,e}_{ij}(x)=\left(1+\frac{m}{2|x|^{n-1}} \right)^{\frac{4}{n-1}}\delta_{ij} +(e(x))_{ij}
            		\end{equation}
            		where the tensor $e(x)$ is assumed to satisfy the bounds
            		\be\label{eq:errorterm}
            		|e(x)|=O(|x|^{-n}), \ \ |\partial e(x)|=O(|x|^{-n-1}), \ \ldots \ |\partial^k e(x)|=O(|x|^{-n-k}) 
            		\ee
            		which we shall summarize by simply writing $e(x)=O_k(|x|^{-n})$; more precisely we shall require $e$ to be an element of the Banach space
            		\[
            		\mathscr{E}^{k}:=\left\{e:\mathbb{R}^{n+1}\setminus B_{r_0}\to\mathcal{M}^{3\times 3}_{sym}(\mathbb{R}), \ \|e\|_{\mathscr{E}^k}:=\sum_{i,j=1}^{n+1}\sum_{|\alpha|\leq k} \sup_{|x|\geq r_0}|\partial^{\alpha}e_{ij}(x)||x|^{n+|\alpha|}<\infty    \right\}.
            		\]
            		We are always tacitly assuming $n\geq 2$. \\

            		\indent  We remark that the number $m$ equals the ADM mass of the asymptotically flat manifold in question (see \cite{ADM59, Bar86} and \cite{Wal84} for a broader contextualization). Also, recall that asymptotically Schwarzschildean data are dense in general asymptotically flat data with respect to suitably weighted Sobolev (or Schauder) norms, see \cite{EHLS11} and reference therein. 
            		
            		\begin{remark}\label{rem:ratedecay}
            		The whole proof of Theorem \ref{thm:main} would in fact go through under the milder assumption that the error term satisfies $|e(x)|=O_2(|x|^{-1-\tau})$ for \textsl{some} $\tau>0$, with complications of purely notational character. Instead, we shall remark that the assumption $|e(x)|=o_2(|x|^{-1})$ does not seem sufficient for the argument to go through (the obstruction being related to the proof of Lemma \ref{lem:expansion}). However, if one \textsl{postulates} the defining functions $\Omega_{\left(\cdot\right)}$ to have a sufficiently good rate of decay (enough to legitimate integration by parts in Lemma \ref{lem:IntParts}) then the argument presented in Subsection \ref{subs:endproof} can be applied.
            			\end{remark}

            	\subsection{Catenoidal graphs} \label{subs:catgr}
            	
            		\indent Given $n\ge 2$, let us recall that  the minimal $n$-catenoid  $\textbf{Cat}_c$ of neck $c>0$ in $\mathbb{R}^{n+1}$ (with vertical axis)  can be parametrized by means of the smooth map $(\theta,s)\in S^{n-1}\times \R \to \R^{n+1}$ defined by $ F_c(\theta,s):=c(\phi(s) \theta, \psi(s))$ where
            		\[
            		\phi(s):=(\cosh((n-1) s))^{\frac{1} {n-1}}, \quad \psi(s):= \int_0^s \phi^{2-n}(\sigma) \, d \sigma.
            		\]
            		By direct computation one can check that the Jacobi operator of $\textbf{Cat}_c$ (i.e. the linearized mean curvature operator) is given by 
            		\[L_{F_c}^{\de}:=\frac{1}{c^2}\left(\frac{1}{\phi^n} \frac{\partial}{\partial s} \left( \phi^{n-2} \frac{\partial}{\partial s} \right) + \frac{1}{\phi^2} \Delta_{S^{n-1}} + \frac{n(n-1)}{\phi^{2n}}\right). 
            		\]
            		
            		A choice of unit normal vector to $\textbf{Cat}_c$ is given by
            		\[
            		\textbf{n}_{F_c}^{\de}=\phi(s)^{-1}(\phi^{2-n}(s)\theta,-\dot{\phi}(s)).
            		\]

            		Given an integer $k\geq 0$ and $q\in\mathbb{R}$, we shall then introduce the weighted functional spaces $\mathscr{C}^k_q(S^{n-1}\times\mathbb{R})$ on the $n-$dimensional cylinder that consist of those $\mathscr{C}^k$ functions for which the norm
            		\[
            		\left\|f\right\|_{\mathscr{C}^{k}_q}=\sum_{|\alpha|\leq k}\sup_{(\theta,s)\in S^{n-1}\times\mathbb{R}}|\nabla^{\alpha}f(\theta,s)||\phi^{-q}(s)|
            		\]
            		is finite.
            		In the special case $q=0$ we shall denote the corresponding space by $\mathscr{C}^k_b(S^{n-1}\times\mathbb{R})$. \\
            		\indent Furthermore, we will consider the class of functions $\mathscr{C}^{k}_{(-1)^+}(S^{n-1}\times\mathbb{R})$ given by the intersection
            		\[
            		\mathscr{C}^{k}_{(-1)^+}(S^{n-1}\times\mathbb{R})=\bigcap_{q\in (-1,0)}	\mathscr{C}^{k}_{q}(S^{n-1}\times\mathbb{R}).
            		\]

            		For a function $\Omega$ which is suitably smooth and bounded (for the purposes of this section say $\Omega\in \mathscr{C}^2_b(S^{n-1}\times\mathbb{R})$ with $	\left\|\Omega\right\|_{\mathscr{C}^{2}_b}\leq c/3$) we can conveniently describe a \textbf{normal graph} over $\textbf{Cat}_c$ by means of the local parametrizations defined by the map
            		\be\label{eq:norgen}
            		F_c^{\Omega}=\left(\left(c \phi(s)+\Omega(\theta,s)\phi^{1-n}(s
            		)\right)\theta,c\psi(s)-\frac{\dot{\phi}(s)}{\phi(s)}\Omega(\theta,s) \right).
            		\ee
            		
            			We shall also work in exponentially weighted Sobolev spaces, namely for $n\geq 2$ (in fact, for $n\geq 3$) we will consider the spaces 
            			\[
            			\mathscr{H}^{k}_{q}(S^{n-1}\times\mathbb{R}):=\left\{f: S^{n-1}\times\mathbb{R}\to\mathbb{R} | \sum_{|\alpha|\leq k}\left(\int_{S^{n-1}\times\mathbb{R}}|\nabla^{\alpha}f(\theta,s)|^2|\phi^{-2q}(s)|\,d\theta ds\right)^{1/2}<\infty \right\}
            			\] endowed with their natural Hilbert norms, which we shall denote by $\|\cdot\|_{\mathscr{H}^k_q}$. %In particular, we stress that these are functional spaces over the (Riemannian) product manifold $S^{n-1}\times\mathbb{R}$ (so that there is no dependence on the neck parameter $c$).

            	\subsection{Notations}\label{subs:nota}
            	
            		We will denote with  $\mathcal{L}^{(k)}: \mathscr{C}^k_b(S^{n-1}\times \R) \to \R$ a  linear functional such that  $\mathcal{L}^{(k)}(\Omega)$ is a linear combination (with bounded coefficients) of $\partial^{\alpha} \Omega$, for $|\alpha|\leq k$. As a result, one has
            		\begin{equation}\label{est:Lk}
            		\| \mathcal{L}^{(k)}(\Omega)\|_{\mathscr{C}^0_b} \leq C  \| \Omega \|_{ \mathscr{C}^k_b} \quad \forall \ \Omega \in\mathscr{C}^k_b (S^{n-1}\times \R)  
            		\end{equation}
            		where $C$ is a constant depending on the coefficients of $\mathcal{L}^{(k)}$ but not on $\Omega$. \newline
            		For higher order quantities we will use the notation $\mathcal{Q}^{(k)}:  \mathscr{C}^k_b (S^{n-1}\times \R) \to \R$ to denote a finite linear combination (with bounded coefficients) of monomials of the form $\prod_{i\in I} \partial^{\alpha_i} \Omega$, for $0\leq|\alpha_i|\leq k, |I|\geq 2$, thus satisfying the estimate
            		\begin{equation}\label{est:Qk}
            		\| \mathcal{Q}^{(k)}(\Omega)\|_{\mathscr{C}^0_b} \leq C  \| \Omega \|^2_ {\mathscr{C}^k_b} \quad \forall \ \Omega \in\mathscr{C}^k_b(S^{n-1}\times \R),  \; \| \Omega \|_{\mathscr{C}^k_b}\leq 1
            		\end{equation}
            		where $C$ is a constant depending on the coefficients of $\mathcal{Q}^{(k)}$ but not on $\Omega$.
            		The same notations will be used for  vectorial and matrix-valued quantities depending on $\partial^\alpha \Omega$ and satisfying the estimates \eqref{est:Lk},  \eqref{est:Qk}. \\
            		\indent Throughout the paper, we let $C$ denote a positive constant which is allowed to vary from line to line and in fact even within the same line. Whenever appropriate, we shall stress the functional dependence of $C$ on some of our parameters by adding subscripts, as per $C_{m}$.
            		We shall agree that constants \textsl{without subscripts} are allowed to depend on the ambient dimension and (for what concerns the proof of Theorem \ref{thm:main} and its ancillary results) to be bounded in terms of $\sup_{m\in [0,\varepsilon_0]}\|\Omega_m\|_{\mathscr{C}^2_b}$ (which is of course finite, by the continuity assumption on the curve $\Omega_{(\cdot)}$ in question). Lastly, we shall adopt the notation $O(t)$ to denote some quantity whose absolute value is bounded by $Ct$ where $C>0$ is a constant without subscripts, in the sense above. Since Theorem \ref{thm:main} concerns \textsl{large} values of the neck and \textsl{small} values of the ambient mass, we convene to tacitly assume a positive lower bound for the neck parameter (say $c\geq \overline{c}\geq 1$) and an upper bound for the mass parameter (say $m\leq \underline{m}\leq 1$) so that, for instance, we shall simply write $O(mc^{-1})$ in lieu of $O(mc^{-1})+O(m^2c^{-3})$, which will allow to group up effectively the error terms we need to deal with.

            	\section{A perturbative non-existence result}
            	\label{sec:nonexist}
            	
            	\subsection{Preliminaries on catenoidal graphs}
            	
            	 		When $n=2$, we will describe the Euclidean catenoid of neck radius $c>0$ and vertical axis, $\textbf{Cat}_c$ in $\mathbb{R}^3$, by means of the obvious restrictions of the smooth covering map $F_c:S^{1}\times \mathbb{R}\to\mathbb{R}^{3}$ given by
            	 		\begin{equation}\label{eq:catc}
            	 		F_c(u,v):=(c\cosh(v)\cos(u), c\cosh(v)\sin(u),cv).
            	 		\end{equation}
            	 		Associated to such parametrizations are the tangent vectors
            	 		\[
            	 		\frac{\partial F_c}{\partial u}=(-c\cosh(v)\sin(u),c\cosh(v)\cos(u),0), \ \
            	 		\frac{\partial F_c}{\partial v}=(c\sinh(v)\cos(u),c\sinh(v)\sin(u),c)
            	 		\]
            	 		whose unit (normalized) counterparts are
            	 		\[
            	 		(\textbf{e}_{1})_{F_c}^{\de}=(-\sin(u),\cos(u),0), \ \ (\textbf{e}_{2})_{F_c}^{\de}=(\tanh(v)\cos(u),\tanh(v)\sin(u),\cosh^{-1}(v))
            	 		\]
            	 		and hence there is an associated unit normal vector given by
            	 		\begin{equation}\label{eq:defn}
            	 		\textbf{n}_{F_c}^{\de}=\cosh^{-1}(v)\left(\cos(u),\sin(u), -\sinh(v)\right).
            	 		\end{equation} 
            	 		Lastly, the general formula for parametrized normal graphs \eqref{eq:norgen} specifies (for $n=2$) to
            	 		\begin{equation}\label{eq:norgraph}
            	 		F_c^\Omega(u,v)=\left(\left(c\cosh(v)+\frac{\Omega(u,v)}{\cosh(v)}\right)\cos(u), \left(c\cosh(v)+\frac{\Omega(u,v)}{\cosh(v)}\right)\sin(u),cv-\tanh(v)\Omega(u,v)\right).
            	 		\end{equation}
            	 		
            	 		In the proof of Theorem \ref{thm:main} we shall need the following two lemmata.
            	 		
            	 		\begin{lemma}\label{lem:FFhat}
            	 			Let $F_c:S^1\times \R \to \R$ (resp.  $F^\Omega_c$)  be the parametrization \eqref{eq:catc} of the catenoid in $\R^3$ with neck $c$ (resp. of the normal graph \eqref{eq:norgraph}), regarded as a submanifold of the flat Euclidean space. Then for every $\Omega\in\mathscr{C}^2_b(S^1\times \R)$, one has
            	 			\begin{align}
            	 			F^\Omega_c &= F_c+ \cL^{(0)} (\Omega) \label{eq:hFF}, \\
            	 			\textbf{n}_{F^\Omega_c}^{\de} &= \textbf{n}_{F_c}^{\de} + \frac{1}{c\cosh(v)}\cL^{(1)}(\Omega) + \frac{1}{c^2\cosh^2(v)}\cQ^{(1)}(\Omega),   \label{eq:hnn} \\
            	 			H_{F^\Omega_c}^{\de} &=  L_{F_c}^{\de} (\Omega) + \frac{1}{c^3 \cosh^3(v)} \cQ^{(2)}(\Omega)  \label{eq:hHH}, \\
            	 			|A_{F_c^\Omega}^{\de}|^{2} &= \frac{2}{c^{2} \, \cosh^{4}(v)} +  \frac{1}{c^3 \cosh^4(v)} \cL^{(2)}(\Omega) + \frac{1 }{c^4 \cosh^{4}(v)} \cQ^{(2)}(\Omega) \label{eq:hA2A2}, 
            	 			\end{align}
            	 			where $\textbf{n}_{F_c}^{\de}$ (resp.  $ \textbf{n}_{F_c^\Omega}^{\de}$) is the normal vector \eqref{eq:defn} to the unperturbed catenoid $F_c$ (resp. to the perturbed catenoid $F_c^\Omega$), $H_{F^\Omega_c}^{\de}$ (resp. $A_{F_c^\Omega}^{\de}$) is the mean curvature (resp. the second fundamental form) of $F_c^\Omega$, and $L_{F_c}^{\de}$ is the Jacobi operator of the catenoid $F_c$ namely
            	 			\begin{equation}\label{eq:defLFc}
            	 			L_{F_c}^{\de} (\Omega):= \frac{1}{c^2 \cosh^2(v)} \left(\frac{\partial ^2 \Omega} { \partial u^2} +  \frac{\partial ^2 \Omega} { \partial v^2} +\frac{2 \Omega} { \cosh^2(v)} \right).
            	 			\end{equation}
            	 		\end{lemma}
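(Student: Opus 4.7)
The four identities all follow by direct computation from the explicit parametrization \eqref{eq:norgraph}, exploiting the conformally flat structure of the induced metric on $\textbf{Cat}_c$ together with Weingarten's relation. Equation \eqref{eq:hFF} is immediate: the components of $\textbf{n}_{F_c}^{\de}$ in \eqref{eq:defn} involve only bounded functions ($\cos u$, $\sin u$, $\tanh v$, $\cosh^{-1}(v)$), so $F_c^\Omega - F_c = \textbf{n}_{F_c}^{\de}\,\Omega$ is manifestly of the form $\mathcal{L}^{(0)}(\Omega)$.

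For the remaining three identities, the first step is to compute the tangent frame $\partial_i F_c^\Omega$ for $i=u,v$. Using the Weingarten relation $\partial_i \textbf{n}_{F_c}^{\de} = -A^{j}_{\phantom{j}i}\,\partial_j F_c$ together with the facts that the catenoid has principal curvatures $\pm (c\cosh^2(v))^{-1}$ and $|\partial_i F_c| = c\cosh(v)$, one obtains a decomposition of the form
\[
\partial_i F_c^\Omega = \left(1 + \frac{\mathcal{L}^{(0)}(\Omega)}{c\cosh^2(v)}\right)\partial_i F_c + \textbf{n}_{F_c}^{\de}\,\partial_i \Omega.
\]
Taking scalar products yields an induced metric of the form $\tilde g_{ij} = c^2\cosh^2(v)\,\delta_{ij} + c\cosh(v)\,\mathcal{L}^{(1)}(\Omega) + \mathcal{Q}^{(1)}(\Omega)$. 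The smallness condition $\|\Omega\|_{\mathscr{C}^2_b}\leq c/3$ ensures invertibility of $\tilde g$, and both $\tilde g^{ij}$ and $\sqrt{\det\tilde g}$ can then be expanded as Neumann series whose terms fall into the $\mathcal{L}^{(k)}$, $\mathcal{Q}^{(k)}$ classes with precisely the prefactors in $c$ and $\cosh(v)$ appearing in the statement. The unit normal \eqref{eq:hnn} is obtained by computing $\partial_u F_c^\Omega \times \partial_v F_c^\Omega$, decomposing it along $\textbf{n}_{F_c}^{\de}$ and the tangent directions, and dividing by its (expanded) norm.

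For \eqref{eq:hHH}, I would appeal to the standard expansion of the mean curvature under a normal deformation of a minimal surface, namely $H_{F_c^\Omega} = H_{F_c} + \Delta_{\textbf{Cat}_c}\Omega + |A_{F_c}^{\de}|^2\,\Omega + \text{(quadratic remainder)}$. Since $\textbf{Cat}_c$ is minimal the first term vanishes, and in the isothermal coordinates $(u,v)$ one has $\Delta_{\textbf{Cat}_c} = (c^2\cosh^2(v))^{-1}(\partial_u^2 + \partial_v^2)$ together with $|A_{F_c}^{\de}|^2 = 2/(c^2\cosh^4(v))$; thus the linear-in-$\Omega$ part of $H_{F_c^\Omega}$ is precisely $L_{F_c}^{\de}(\Omega)$ as in \eqref{eq:defLFc}. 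The quadratic remainder is tracked via the expansions of $\tilde g^{ij}$ and of the second fundamental form $\tilde h_{ij} = \langle \partial_i\partial_j F_c^\Omega,\,\textbf{n}_{F_c^\Omega}^{\de}\rangle$ obtained above. Equation \eqref{eq:hA2A2} then follows from $|A_{F_c^\Omega}^{\de}|^2 = \tilde g^{ik}\tilde g^{jl}\tilde h_{ij}\tilde h_{kl}$ by factoring out the leading $2/(c^2\cosh^4(v))$ coming from the unperturbed catenoid and collecting the linear and quadratic corrections into $\mathcal{L}^{(2)}$ and $\mathcal{Q}^{(2)}$ with the indicated prefactors.

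The main obstacle is organizational rather than conceptual: the entire argument reduces to a Taylor expansion of geometric scalars in $\Omega$, and the work lies in verifying that at each stage every remainder has the precise form $c^{-p}\cosh^{-q}(v)\,\mathcal{L}^{(k)}(\Omega)$ or $c^{-p}\cosh^{-q}(v)\,\mathcal{Q}^{(k)}(\Omega)$ claimed in the statement. This bookkeeping is routine once one combines the Weingarten formula, the conformally flat catenoidal metric, and the smallness of $\|\Omega\|_{\mathscr{C}^2_b}$ (needed to control the Neumann series both in the matrix inversion of $\tilde g$ and in the normalization of $\textbf{n}_{F_c^\Omega}^{\de}$).
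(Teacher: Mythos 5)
Your proposal is correct and follows essentially the same route as the paper: a direct Taylor expansion in $\Omega$ of the tangent frame, induced metric, unit normal, mean curvature and second fundamental form computed from \eqref{eq:norgraph}, with all remainders sorted into the $\mathcal{L}^{(k)}$ and $\mathcal{Q}^{(k)}$ classes with the indicated powers of $c$ and $\cosh(v)$. The only cosmetic differences are that you organize the frame computation via the Weingarten relation and quote the standard fact that the linearization of $H$ at a minimal surface is the Jacobi operator $\Delta+|A|^2$, whereas the paper obtains the same expansions by differentiating \eqref{eq:norgraph} directly and separating orders in $H=g^{ij}\,(\partial_{ij}F\cdot\textbf{n})$.
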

            	 		
            	 		\begin{proof}
            	 			From the expression \eqref{eq:norgraph} of the normal graph $F^\Omega_c$ it is clear that
            	 			\[F^\Omega_c = F_c+ \cL^{(0)} (\Omega), \quad \partial_i F^\Omega_c = \partial_i F_c+ \cL^{(1)} (\Omega),  \quad \partial^2_{ij}F^\Omega_c = \partial^2_{ij} F_c+ \cL^{(2)} (\Omega). 
            	 			\]
            	 			It follows that the first fundamental form $g_{F_c^\Omega}^{\de}$ of  $F^\Omega_c$  reads
            	 			\[
            	 			(g_{F^\Omega_c}^{\de})_{ij}= c^2 \cosh^2(v) \left( \delta_{ij} + \frac{\cL^{(1)}(\Omega)}{c \, \cosh(v)}+  \frac{\cQ^{(1)}(\Omega)}{c^2 \, \cosh^2(v)} \right), 
            	 			\]
            	 			which in turn implies that the inverse matrix of $g_{F^\Omega_c}^{\de}$ is given by
            	 			\[
            	 			(g_{F^\Omega_c}^{\de})^{ij}= \frac{1}{c^2 \cosh^2(v)} \left( \delta^{ij} + \frac{\cL^{(1)}(\Omega)}{c \, \cosh(v)}+  \frac{\cQ^{(1)}(\Omega)}{c^2 \, \cosh^2(v)} \right).
            	 			\]
            	 			Making use, once again, of the expression above for $\partial_i F^{\Omega}_c$ in expanding the wedge product $\partial_u F^{\Omega}_c\wedge \partial_v F^{\Omega}_c$ one finds
            	 			\[
            	 			\textbf{n}_{F^\Omega_c}^{\de} =  \textbf{n}_{F_c}^{\de} + \frac{1}{c\cosh(v)}\cL^{(1)}(\Omega) + \frac{1}{c^2\cosh^2(v)}\cQ^{(1)}(\Omega)   \\ ,  
            	 			\]
            	 			and hence, separating the zeroth, first and higher order terms in $\Omega$ we conclude
            	 			\[
            	 			H_{F^\Omega_c}^{\de}= (g_{F^\Omega_c}^{\de})^{ij} \;  (\partial_{ij}F^\Omega_c \cdot \textbf{n}_{F^\Omega_c}^{\de}) = H_{F_c}^{\de}+  L_{F_c}^{\de} (\Omega) + \frac{\cQ^{(2)}(\Omega)}{c^3 \cosh^3(v)}=  L_{F_c}^{\de} (\Omega) + \frac{\cQ^{(2)}(\Omega)}{c^3 \cosh^3(v)}, 
            	 			\]
            	 			where of course the last equality relies on the fact that $H_{F_c}^{\de}\equiv 0$ as the catenoid $F_c$ is a  minimal surface.
            	 			Combining the above expressions we also get
            	 			\[(A_{F^\Omega_c}^{\de})^{i}_{j}= (g_{F^\Omega_c}^{\de})^{ik}  \;  (\partial_{kj}F^\Omega_c \cdot \textbf{n}_{F^\Omega_c}^{\de})=(A_{F_{c}}^{\de})^{i}_{j}+   \frac{1}{c^2 \cosh^2(v)} \cL^{(2)}(\Omega) + \frac{1}{c^3 \cosh^{3}(v)} \cQ^{(2)}(\Omega).
            	 			\]
            	 			Let us now compute $(A_{F_{c}}^{\de})^{i}_{j}$. 
            	 			Since 
            	 			\begin{align}
            	 			\partial^{2}_{uu} F_{c}(u,v)&= \left( -c \cosh(v) \, \cos(u),  -c \cosh(v) \, \sin(u), 0 \right), \nonumber \\
            	 			\partial^{2}_{uv} F_{c}(u,v)&= \left( -c \sinh(v) \, \sin(u),  c \sinh(v) \, \cos(u), 0 \right), \nonumber \\
            	 			\partial^{2}_{vv} F_{c}(u,v)&= \left( c \cosh(v) \, \cos(u),  c \cosh(v) \, \sin(u), 0 \right),\nonumber
            	 			\end{align}
            	 			from the expression \eqref{eq:defn} of the unit normal $\textbf{n}_{F_c}^{\de}$, we obtain that
            	 			\[
            	 			(A_{F_{c}}^{\de})^{i}_{j}= (g_{F_{c}}^{\de})^{ik}\;  \partial^{2}_{kj} F_{c} \cdot \textbf{n}_{F_c}^{\de}= \frac{1}{c \, \cosh^{2}(v)}  \, \omega^{i}_{j},			
            	 			\]
            	 			where $\omega^{i}_{j}$ is the symplectic matrix $\omega^{1}_{1}=-1$, $\omega^{2}_{2}=1$,  $\omega^{1}_{2}=\omega^{2}_{1}=0$.  This in turn implies  
            	 			\[
            	 			|A_{F_{c}}^{\de}|^{2}=\frac{2}{c^{2} \, \cosh^{4}(v)}.
            	 			\]
            	 			Therefore, we can conclude that
            	 			\begin{align}
            	 			|A_{F^\Omega_c}^{\de}|^{2}&=  (A_{F^\Omega_c}^{\de})^{i}_{j} (A_{F^\Omega_c}^{\de})^{j}_{i}= |A_{F_{c}}^{\de}|^{2}+  \frac{|A_{F_c}^{\de}|}{c^2 \cosh^2(v)} \cL^{(2)}(\Omega) + \frac{|A_{F_{c}}^{\de}| }{c^3 \cosh^{3}(v)} \cQ^{(2)}(\Omega)+  \frac{\cQ^{(2)}(\Omega)}{c^4 \cosh^{4}(v)}   \nonumber \\
            	 			&= \frac{2}{c^{2} \, \cosh^{4}(v)} +  \frac{1}{c^3 \cosh^4(v)} \cL^{(2)}(\Omega) + \frac{1 }{c^4 \cosh^{4}(v)} \cQ^{(2)}(\Omega), \nonumber
            	 			\end{align}
            	 			as desired.
            	 		\end{proof}
            	 		
            	 			\begin{remark}\label{rem:boundedgraph}(Quadratic area growth and finite total curvature of $\mathscr{C}^2$-bounded graphs over a catenoid).
            	 				Let $F_c:S^1\times \R \to \R$  be the parametrization \eqref{eq:catc} of the catenoid in $\R^3$ with neck $c$ and let  $\Omega:S^1\times \R \to \R$ with $\|\Omega\|_{\mathscr{C}^{2}_b}\leq K< \infty$. Then Lemma  \ref{lem:FFhat} implies that there exists $C\in (0,\infty)$ such that
            	 				\begin{align}
            	 				dvol_{g_{F_{c}^{\Omega}}^{\de}}&\leq (1+C)  dvol_{g_{F_{c}}^{\de}}=  (1+C)\, c^{2}\cosh(v)^{2} \;  du\, dv , \nonumber \\
            	 				|A_{F^\Omega_c}^{\de}|^{2} &\leq  (1+C)  |A_{F_{c}}^{\de}|^{2} =  (1+C)  \frac{2}{c^{2} \, \cosh^{4}(v)}.  \nonumber
            	 				\end{align}
            	 				Since the catenoid $\textbf{Cat}_c$ has quadratic area growth and finite total curvature, namely there exists a positive constant $C$ such that $|\textbf{Cat}_c\cap B_{R})|\leq C \, R^{2}$ and $\int_{S^{1}\times \R}  |A_{F_{c}}^{\de}|^{2} dvol_{g_{F_{c}}^{\de}}\leq C$, the above formulas imply that the same is true for the normal graph $F^\Omega_c$.
            	 			\end{remark}
            	
            	\subsection{Pointwise changes of mean curvature}
            	
            	\begin{lemma}\label{lem:meancchange}
            		Let $M=\R^{3}\setminus B_{r_0}$ and for $k\geq 1$ let $e_{(\cdot)}:[0,\varepsilon]\to \mathscr{E}^k$ be a differentiable curve with $e_{0}=0$ and consider for small $m>0$ the Riemannian metric 
            		\be\label{eq:gg0}
            		g^{m,e_{m}}_{ij}(x):=\left(1+\frac{m}{2|x|}\right)^{4}\delta_{ij} + (e_{m}(x))_{ij}.
            		\ee  
            		Then, given a surface $S \subset M$ and called  $H_{S}^{g^{m}}$  (respectively $H_{S}^{g^{m,e_{m}}}$)   the mean curvature  of $S$ in metric $g^{m}:=g^{m,0}$  (resp. in metric $g^{m,e_{m}}$), it holds
            		\be\label{eq:HgHg0}
            		H^{g^{m,e_{m}}}_{S}(x)=H^{g^{m}}_{S}(x)+ |A_{S}^{g^{m}}| \, O(m |x|^{-2})+ O(m |x|^{-3}) \quad \forall \ x \in  S,
            		\ee
            		where   $|A_{S}^{g^{m}}|$ denotes the length of the second fundamental form of $S$ in metric $g^{m}$. 
            	\end{lemma}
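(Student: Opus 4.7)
The plan is to use the differentiability of $e_{(\cdot)}$ at $0$ together with $e_0 = 0$ to convert all estimates into pointwise bounds of the form $|e_m(x)| = O(m|x|^{-2})$ and $|\partial e_m(x)| = O(m|x|^{-3})$. Indeed, differentiability at $0$ in the Banach space $\mathscr{E}^k$ gives $\|e_m\|_{\mathscr{E}^k} = O(m)$ as $m \to 0^+$, so by the very definition of $\mathscr{E}^k$ (with $n=2$) one has $|\partial^\alpha e_m(x)| \le C\, m\, |x|^{-2-|\alpha|}$ for every $|\alpha| \le k$.

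Next I would expand the mean curvature $H_S^{g^{m,e_m}}$ around $H_S^{g^m}$ treating $e_m$ as a first-order perturbation of the ambient metric $g^m$. Pick local ambient coordinates (e.g.\ the asymptotic coordinates of the fixed chart on $M = \R^3 \setminus B_{r_0}$) and a local parametrization of $S$. In such coordinates one writes
\[
H_S^g = (g|_S)^{ij}\, A^g_{ij}, \qquad A^g_{ij} = \langle \partial_i \partial_j F, \nu^g\rangle_g + \G^k_{lm}(g)\,\partial_i F^l \partial_j F^m\, g_{kp}\, \nu^{g,p},
\]
so the passage from $g^m$ to $g^{m,e_m} = g^m + e_m$ perturbs: (a) the induced inverse metric $(g|_S)^{ij}$ by a quantity of size $O(|e_m|) = O(m|x|^{-2})$; (b) the unit normal $\nu^g$ by a quantity of size $O(|e_m|) = O(m|x|^{-2})$, since re-normalization and re-orthogonalization only involve $e_m$ and not its derivatives; (c) the ambient Christoffel symbols by a quantity of size $O(|\partial e_m|) = O(m|x|^{-3})$, via the standard identity $\tG^k_{ij} - \G^k_{ij} = \tfrac{1}{2}\tg^{kl}(\n_i e_{jl} + \n_j e_{il} - \n_l e_{ij})$.

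Collecting, $H_S^{g^{m,e_m}} - H_S^{g^m}$ splits into two structural pieces. The contributions from (a) and (b) multiply scalar second-fundamental-form quantities of the unperturbed surface (because the undifferentiated change in the inverse metric, and the change in the unit normal, both hit the tensor $\partial_i \partial_j F$ which contributes to $A^{g^m}_{ij}$), and therefore yield errors bounded by $|A^{g^m}_S|\cdot O(m|x|^{-2})$. The contribution from (c) multiplies first-order data $\partial_i F$, $\nu^g$ of size $O(1)$ only, and so produces errors bounded by $O(m|x|^{-3})$. Higher order terms in $e_m$ (quadratic and beyond) are of strictly smaller size $O(m^2 |x|^{-4})$ etc.\ and are absorbed into either of the two expressions above. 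Putting everything together one obtains exactly
\[
H^{g^{m,e_{m}}}_{S}(x)=H^{g^{m}}_{S}(x)+ |A_{S}^{g^{m}}| \, O(m |x|^{-2})+ O(m |x|^{-3}),
\]
as claimed. The main bookkeeping obstacle is to verify that the "normal-vector variation" and "inverse induced metric variation" contributions only enter the mean curvature paired with components of $A^{g^m}$ (so that the claimed factor $|A^{g^m}_S|$ really appears instead of an unweighted $O(m|x|^{-2})$ term); this is a direct matter once one expands the formula for $H$ above and tracks, term by term, which factors carry derivatives of $F$ and which do not.
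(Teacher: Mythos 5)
Your proposal is correct and follows essentially the same route as the paper: differentiability of $e_{(\cdot)}$ at $0$ with $e_0=0$ yields $\|e_m\|_{\mathscr{E}^k}=O(m)$ and hence the pointwise bounds $|\partial^\alpha e_m|=O(m|x|^{-2-|\alpha|})$, after which one expands the mean curvature formula and tracks the three perturbation sources — induced inverse metric and unit normal of size $O(m|x|^{-2})$ (which pair with second-fundamental-form data, producing the $|A_S^{g^m}|$ factor) and the connection/Christoffel terms of size $O(m|x|^{-3})$. The only difference is cosmetic: the paper works with a $g^m$-orthonormal moving frame and covariant derivatives rather than a coordinate parametrization and Christoffel symbols.
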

            	
            	\begin{proof}
            		Throughout the proof, we shall denote by $g_{S}^{m}$ the Riemannian metric on $S$ induced by restriction of $g^{m}$, and by $g^{m,e_{m}}_{S}$ the corresponding one induced by $g^{m,e_{m}}$.
            		Furthermore, we fix a local moving frame $\left\{\textbf{t}_{1},\textbf{t}_{2}\right\}$ on $S$ which is orthonormal with respect to $g^{m}_{S}$, i.e. $\textbf{t}_{i}(x)\in T_{x}S$ and  $g^{m}_{S}(\textbf{t}_{i},\textbf{t}_{j})=\delta_{ij}$.  Clearly, by virtue of the differentiability assumption on $e_{(\cdot)}$ (hence on the resulting pointwise bounds) we have
            		\be\label{eq:gSgS-1}
            		(g^{m,e_{m}}_{S})_{ij}:= g^{m,e_{m}}_{S}(\textbf{t}_{i},\textbf{t}_{j})= \delta_{ij}+O(m|x|^{-2}), \quad (g^{m,e_{m}}_{S})^{ij}:= (g^{m,e_{m}}_{S})^{-1}_{ij}= \delta^{ij}+O(m|x|^{-2}).
            		\ee
            		Moreover, said $\textbf{n}^{g^{m,e_{m}}}_{S}$ (resp. $\textbf{n}^{g^{m}}_{S}$) a choice of unit normal vectors to $S$ with respect to $g^{m,e_{m}}$ (resp. $g^{m}$),  using the relations 
            		\[
            		g^{m,e_{m}}(\textbf{n}^{g^{m,e_{m}}}_{S}, \textbf{t}_{i})=g^{m}(\textbf{n}^{g^{m}}_{S}, \textbf{t}_{i})= 0, \quad i=1,2, \qquad g^{m,e_{m}}(\textbf{n}^{g^{m,e_{m}}}_{S}, \textbf{n}^{g^{m,e_{m}}}_{S})=g^{m}(\textbf{n}^{g^{m}}_{S}, \textbf{n}^{g^{m}}_{S})= 1
            		\]
            		together with \eqref{eq:gSgS-1} and decomposing $\textbf{n}^{g^{m,e_{m}}}_{S}$ with respect to the frame $\left\{\textbf{t}_1,\textbf{t}_2,\textbf{n}_S^{g^{m}}\right\}$, it is easy to check that
            		\be\label{eq:nn0}
            		\textbf{n}^{g^{m,e_{m}}}_{S}=\textbf{n}^{g^{m}}_{S}+O(m|x|^{-2}).
            		\ee
            		If we let $\nabla^{g^{m,e_{m}}}$ and $\nabla^{g^{m}}$ denote the covariant derivative in $(M, g^{m,e_{m}})$ and $(M, g^{m})$ respectively,
            		the decay assumptions on the term $e_{m}$ imply
            		\be\label{eq:nablagnablag0}
            		\nabla^{g^{m,e_{m}}}_{\textbf{t}_{i}} \textbf{t}_{j}=\nabla^{g^{m}}_{\textbf{t}_{i}}\textbf{t}_{j}+ O(m|x|^{-3}).
            		\ee
            		Combining \eqref{eq:gSgS-1},  \eqref{eq:nn0}  and \eqref{eq:nablagnablag0} we obtain
            		\begin{align}
            		H^{g^{m,e_{m}}}_{S}&=-  (g_{S}^{m,e_{m}})^{ij} \, g^{m,e_{m}}( \nabla^{g^{m,e_{m}}}_{\textbf{t}_{i}} \textbf{t}_{j},  \textbf{n}^{g^{m,e_{m}}}_{S}) \nonumber \\
            		&=- (\delta^{ij}+O(m|x|^{-2})) \cdot \nonumber \\
            		&\qquad\cdot(\delta+O(m|x|^{-2})) \Big(\nabla^{g^{m}}_{\textbf{t}_{i}} \textbf{t}_{j}+O(m|x|^{-3}) ,\textbf{n}^{g^{m}}_{S}+O(m|x|^{-2})\Big)  \nonumber \\
            		&= H^{g^{m}}_{S}+ |A_{S}^{g^{m}}| \, O(m|x|^{-2})+ O(m|x|^{-3}),  \nonumber 
            		\end{align}
            		as desired.
            	\end{proof}
            	
            		We proceed by recalling how the mean curvature of a surface changes under a (pointwise) conformal transformation of the ambient metric. 
            		
            		\begin{lemma}\label{lem:confchange}
            			Let $(M^3,g)$ be a Riemannian manifold and let $g_2=f^4g_1$ be a pointwise conformal metric defined by a differentiable, positive factor. Given a surface $S\subset M$ and denoted by $H^{g_1}_S$ (resp. $H^{g_2}_S$) the mean curvature of $S$ in $(M^3, g_1)$ with respect to the unit normal vector $\textbf{n}^{g_1}_S$ (resp. in $(M^3, g_2)$, with respect to $\textbf{n}^{g_2}_S=f^{-2}\textbf{n}^{g_1}_S$) then
            			\[
            			H^{g_2}_S= \frac{1}{f^2}(H^{g_1}_S+4\textbf{n}^{g_1}_S\cdot \partial \log f).
            			\]  	
            		\end{lemma}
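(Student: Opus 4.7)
The plan is to reduce this to the well-known transformation law for the Levi-Civita connection under a conformal change of metric. Write the conformal factor in exponential form by setting $\phi := 2\log f$, so that $g_2 = e^{2\phi} g_1$. Then for any vector fields $X,Y$ on $M$ one has the standard identity
\[
\nabla^{g_2}_X Y \;=\; \nabla^{g_1}_X Y + X(\phi) Y + Y(\phi) X - g_1(X,Y)\,\nabla^{g_1}\phi,
\]
which I will either quote or derive by a one-line check of the Koszul formula. The induced metric, its inverse, and the unit normal rescale as $(g_2)_S = e^{2\phi}(g_1)_S$, $(g_2)_S^{ij}= e^{-2\phi}(g_1)_S^{ij}$, and $\textbf{n}^{g_2}_S = e^{-\phi}\textbf{n}^{g_1}_S = f^{-2}\textbf{n}^{g_1}_S$.

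Next, fix a local frame $\{\textbf{t}_1,\textbf{t}_2\}$ on $S$ that is orthonormal for $g_1$ (so $g_1(\textbf{t}_i,\textbf{t}_j) = \delta_{ij}$ and $g_1(\textbf{t}_i,\textbf{n}^{g_1}_S)=0$) and plug the transformation law for $\nabla^{g_2}$ into the mean curvature formula
\[
H^{g_2}_S = -(g_2)_S^{ij}\,g_2\!\left(\nabla^{g_2}_{\textbf{t}_i}\textbf{t}_j,\;\textbf{n}^{g_2}_S\right),
\]
consistently with the sign convention already used in Lemma \ref{lem:meancchange}. The two tangential correction terms $\textbf{t}_i(\phi)\textbf{t}_j + \textbf{t}_j(\phi)\textbf{t}_i$ contribute nothing because they are $g_1$-orthogonal to $\textbf{n}^{g_1}_S$. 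The remaining terms give
\[
-g_2(\nabla^{g_2}_{\textbf{t}_i}\textbf{t}_j,\textbf{n}^{g_2}_S) = -e^{\phi}\,g_1(\nabla^{g_1}_{\textbf{t}_i}\textbf{t}_j,\textbf{n}^{g_1}_S) + e^{\phi}\,g_1(\textbf{t}_i,\textbf{t}_j)\,\textbf{n}^{g_1}_S\!\cdot\!\nabla^{g_1}\phi,
\]
after using $g_2 = e^{2\phi}g_1$ and $\textbf{n}^{g_2}_S = e^{-\phi}\textbf{n}^{g_1}_S$.

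Tracing against $(g_2)_S^{ij} = e^{-2\phi}\delta^{ij}$ and summing the two tangent indices collapses the second term to $2\,\textbf{n}^{g_1}_S\!\cdot\!\nabla^{g_1}\phi$, yielding
\[
H^{g_2}_S = e^{-\phi}\bigl(H^{g_1}_S + 2\,\textbf{n}^{g_1}_S\!\cdot\!\nabla^{g_1}\phi\bigr).
\]
Substituting back $\phi = 2\log f$ produces the factor $e^{-\phi} = f^{-2}$ and turns $2\,\nabla^{g_1}\phi$ into $4\,\nabla^{g_1}\log f$, which is exactly the claimed formula. There is no real obstacle: the only subtlety is being careful about the sign convention for $H$ (the paper uses $H=-\operatorname{tr}\!\langle\nabla\cdot,\textbf{n}\rangle$) and noting that the ``dimensional'' coefficient $2$ in front of $\textbf{n}\!\cdot\!\nabla\phi$ is the dimension of $S$, so that the numerical coefficient $4$ in the statement is precisely $\dim(S)\cdot 2 = 4$ coming from the exponent $4$ in the conformal factor.
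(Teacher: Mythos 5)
Your proof is correct: the sign bookkeeping works out, the coefficient $4=2\cdot\dim S$ is exactly as you say, and the convention $H=-(g_S)^{ij}\,g(\nabla_{\textbf{t}_i}\textbf{t}_j,\textbf{n})$ is indeed the one the paper adopts in Lemma \ref{lem:meancchange} (one can also sanity-check the sign by noting that for $g_1=\delta$ and $f=1+\tfrac{m}{2|x|}$ your formula makes the coordinate sphere $|x|=m/2$ minimal, as it must be for the Schwarzschild horizon). The paper states this lemma as a standard recollection and gives no proof, so there is nothing to compare against; your derivation via the conformal transformation law of the Levi-Civita connection is the standard and complete one.
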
	
            		
            		\begin{remark}\label{rem:fbounds}
            		Throughout this section, we shall exploit the previous formula with $M^3=\mathbb{R}^3\setminus B_{r_0}$, $g_1$ the flat Euclidean metric and $f(x)=1+\frac{m}{2|x|}$. In this respect, we will repeatedly make use of the following simple estimates: given that
            		\be\label{eq:fomegaesp}
            		|F^\Omega_c|=|F_c|\left(1+\frac{2c\psi_0\Omega}{|F_c|^2}+\frac{|\Omega|^2}{|F_c|^2} \right)^{1/2} \ \textrm{for} \ \psi_0(u,v)=1-v\tanh(v)
            		\ee
            		which implies (for $\Omega\in\mathscr{C}^0_b(S^1\times\mathbb{R})$) that $|F^\Omega_c|=|F_c|(1+O(c^{-1}\cosh^{-1}(v)))$, we have the bounds
            		\be\label{eq:fomegabound}
            		1\leq f(F_c(u,v))\leq 1+ O(mc^{-1}), \ \  f(F^\Omega_c(u,v))=1+O(mc^{-1}).
            		\ee
            		\end{remark}

            	\begin{remark}\label{rem:len2ndff}
            		In order to employ the estimate \eqref{eq:HgHg0}, it is useful to get a bound on   $|A_{F^{\Omega}_{c}}^{g^{m}}|$. To this aim, using the pointwise conformal invariance of $|\mathring{A}|^{2} \, dvol$, we first observe that
            		\begin{align}\label{eq:mathringAgmgmem} \nonumber
            		|\mathring{A}^{g^{m}}_{F_{c}^{\Omega}}|^{2}(u,v)&=\left(1+\frac{m}{2|{F}_{c}^{\Omega}(u,v)|}\right)^{-4} |\mathring{A}^{\delta}_{F_{c}^{\Omega}}|^{2} (u,v) \\ 
            		&\leq \left(1+\frac{m}{2|{F}_{c}^{\Omega}(u,v)|}\right)^{-4} |A^{\delta}_{F_{c}^{\Omega}}|^{2}(u,v)  = O(c^{-2} \cosh^{-4}(v)),
            		\end{align}
            		where in the last estimate we used Remark \ref{rem:boundedgraph} as well as Remark \ref{rem:fbounds}.
            		
            On the other hand
            		\begin{align}\label{eq:meancbound} \nonumber
            		|H^{g^{m}}_{F_{c}^{\Omega}}|^{2}(u,v)&\leq \left|H^{\delta}_{F_{c}^{\Omega}}(u,v)+ 4 \textbf{n}^{\delta}_{F_{c}^{\Omega}}(u,v) \cdot \partial \log\left(1+\frac{m}{2|x|} \right)(F^\Omega_c(u,v)) \right|^{2} \\ 
            		&\leq 2|A^{\delta}_{F_{c}^{\Omega}}|^{2}(u,v)+\frac{8m^2}{|F^\Omega_c(u,v)|^4}  = O(c^{-2} \cosh^{-4}(v)).
            		\end{align}

            		Combining  \eqref{eq:mathringAgmgmem} with \eqref{eq:meancbound}, we infer that
            		\begin{equation}\label{eq:Agmgmem}
            		|A^{g^{m}}_{F_{c}^{\Omega}}|^{2}(u,v)=  |\mathring{A}^{g^{m}}_{F_{c}^{\Omega}}|^{2}(u,v)+  \frac{1}{2}|H^{g^{m}}_{F_{c}^{\Omega}}|^{2}(u,v)     = O(c^{-2} \cosh^{-4}(v)).
            		\end{equation}
            		Plugging \eqref{eq:Agmgmem} into  \eqref{eq:HgHg0} and evaluating along the catenoid, we get
            		\begin{equation}\label{eq:HFcgmgmem}
            		H^{g^{m,e_{m}}}_{F^{\Omega}_{c}}(u,v)=H^{g^{m}}_{F^{\Omega}_{c}}(u,v)+ O(m c^{-3} \cosh^{-3}(v) ).
            		\end{equation}
            			\end{remark}

            			This formula will be used twice in proving Theorem \ref{thm:main}.
            	
            	\subsection{Expansion at infinity for minimal parametrizations}\label{subs:struct}
            	
            	\begin{lemma}\label{lem:expansion}
            		Let $M=\R^{3}\setminus B_{r_0}$ and consider the Riemannian metric
            		\be\label{eq:gg0}
            		g^{m,e}_{ij}(x):=\left(1+\frac{m}{2|x|}\right)^{4}\delta_{ij} + (e(x))_{ij}.
            		\ee  
            		where $e\in\mathscr{E}^2$. Suppose that $F^\Omega_c: S^1\times\mathbb{R}\to (M,g^{m,e})$ is a minimal immersion defined by some $\Omega\in\mathscr{C}^2_b(S^1\times\mathbb{R})$. If $\Omega(u,v)=o(1)$ as $v\to\pm\infty$ then $\Omega\in\mathscr{C}^{2}_{(-1)^{+}}$, namely for all $\varepsilon>0$ one has
            		\[
            		|\Omega(u,v)|+|\nabla \Omega(u,v)|+|\nabla^2\Omega(u,v)|\leq C_{\varepsilon, m, c}|\cosh^{-1+\varepsilon}(v)|.
            		\]
            		
            	\end{lemma}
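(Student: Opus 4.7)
My plan for Lemma \ref{lem:expansion} is to translate the minimality condition $H^{g^{m,e}}_{F^\Omega_c}=0$ into a semilinear elliptic PDE for $\Omega$ on the cylinder $S^1\times\R$, and then extract the $\cosh^{-1+\varepsilon}(v)$ decay by comparison with the explicit barrier $W_\alpha(v):=\cosh^{-\alpha}(v)$ for $\alpha=1-\varepsilon\in(0,1)$.

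\emph{Step 1 (PDE derivation).} Combining the conformal formula (Lemma \ref{lem:confchange}) with the expansion $H^\delta_{F^\Omega_c}=L^\delta_{F_c}(\Omega)+\mathcal{Q}^{(2)}(\Omega)/(c^3\cosh^3(v))$ from Lemma \ref{lem:FFhat}, the curvature bound \eqref{eq:Agmgmem}, and the perturbation estimate of Lemma \ref{lem:meancchange}, the vanishing of $H^{g^{m,e}}_{F^\Omega_c}$ reduces, after multiplying through by $c^2\cosh^2(v)$, to
\begin{equation*}
\Omega_{uu}+\Omega_{vv}+\frac{2}{\cosh^2(v)}\,\Omega \;=\; R(u,v),
\end{equation*}
where $R$ is a Lipschitz function of $(\Omega,\nabla\Omega,\nabla^2\Omega)$ satisfying $|R|\leq C_{m,c}(1+\|\Omega\|_{\mathscr{C}^2_b}^{2})\cosh^{-1}(v)$. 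The three contributions to $R$ are: the quadratic non-linearity $c^2\cosh^2(v)\cdot\mathcal{Q}^{(2)}(\Omega)/(c^3\cosh^3(v))$; the conformal correction $4\textbf{n}^\delta\cdot\partial\log(1+m/(2|x|))$, which is in fact exponentially small because $\textbf{n}^\delta\cdot x=c(1-v\tanh v)$ while $|F^\Omega_c|^3\sim c^3\cosh^3(v)$; and the $O(m|x|^{-3})$ error from Lemma \ref{lem:meancchange} together with the analogous contribution from $e\in\mathscr{E}^2$ (here it is crucial that $e$ decays like $|x|^{-2}$).

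\emph{Step 2 (barrier comparison).} Denote the left-hand side operator by $\tilde L:=\partial_{uu}+\partial_{vv}+2\cosh^{-2}(v)$. A direct computation yields
\begin{equation*}
\tilde L W_\alpha \;=\; \bigl(\alpha^2\tanh^2(v)+(2-\alpha)\cosh^{-2}(v)\bigr)W_\alpha \;\geq\; \tfrac{\alpha^2}{2}\,W_\alpha \quad\text{on }\{|v|\geq V_0(\alpha)\},
\end{equation*}
whereas $|R|/W_\alpha\leq C_{m,c}\cosh^{-(1-\alpha)}(v)\to 0$ as $|v|\to\infty$. To circumvent the fact that the hypothesis $\Omega=o(1)$ does not a priori translate to $\Omega=O(W_\alpha)$ at infinity, introduce the auxiliary double barrier $\mathcal{B}_{A,\eta}(v):=AW_\alpha(v)+\eta\cosh(v)$ (with $A$ large enough, depending on $V_0$ and $\|\Omega\|_{\mathscr{C}^0}$). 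On the truncated slab $S^1\times\{V_0\leq |v|\leq R_\eta\}$, with $R_\eta$ chosen so large that $\sup_{|v|\geq R_\eta}|\Omega|<\eta$, the functions $\mathcal{B}_{A,\eta}\pm\Omega$ are non-negative on the two boundary circles and satisfy $\tilde L(\mathcal{B}_{A,\eta}\pm\Omega)\geq A(\alpha^2/2)W_\alpha+\eta(\cosh v+2\cosh^{-1}v)-|R|\geq 0$ in the interior (for $V_0$ sufficiently large). The comparison principle then gives $|\Omega|\leq\mathcal{B}_{A,\eta}$ on the slab; sending $\eta\downarrow 0$ on each compact subregion produces $|\Omega(u,v)|\leq A\cosh^{-\alpha}(v)$ on $\{|v|\geq V_0\}$, and the choice $\alpha=1-\varepsilon$ yields the asserted $C^0$ decay. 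Standard Schauder interior estimates applied on unit-size balls of the cylinder then upgrade the $C^0$ decay to the corresponding bounds on $|\nabla\Omega|$ and $|\nabla^2\Omega|$ of the same order.

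\emph{Main obstacle.} The subtle step is the applicability of the maximum principle, since $\tilde L$ carries the positive zeroth-order coefficient $2\cosh^{-2}(v)$ that normally obstructs the standard weak maximum principle. The resolution can be formulated either via Fourier decomposition---for $k\geq 1$ the transformed operator $L_k=\partial_{vv}+(2\cosh^{-2}(v)-k^2)$ has the correct asymptotic sign and the classical maximum principle applies directly, while the $k=0$ mode is treated via the explicit homogeneous solutions $\tanh v$ and $1-v\tanh v$ of the ODE $\Omega_0''+2\cosh^{-2}(v)\Omega_0=R_0$, using variation of parameters---or via the substitution $\Omega=W_\alpha\Psi$ combined with an interior-extremum argument leveraging the positive asymptotic value $\tilde L W_\alpha/W_\alpha\to\alpha^2>0$. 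Both routes crucially depend on $\alpha<1$, a restriction that reflects the fact that the homogeneous Jacobi equation admits bounded decaying solutions at rate exactly $\cosh^{-1}(v)$ (the translation Jacobi fields $\cosh^{-1}(v)\,e^{\pm iu}$ of the catenoid in the horizontal directions). Finally, the bound $|R|\leq C/\cosh(v)$ underpinning the whole argument relies on the Schwarzschildean decay $|e(x)|=O(|x|^{-2})$, consistent with Remark \ref{rem:ratedecay}.
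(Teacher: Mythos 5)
Your Step 1 is sound and matches the paper's derivation: minimality plus Lemmas \ref{lem:meancchange}, \ref{lem:confchange} and \ref{lem:FFhat} reduce the problem to a Poisson-type equation on the cylinder with right-hand side decaying like $\cosh^{-1+\varepsilon}(v)$. The gap is in Step 2. First, the comparison inequality runs in the wrong direction: from $\tilde L(\mathcal{B}_{A,\eta}\pm\Omega)\geq 0$ in the slab and non-negativity on the two boundary circles you cannot conclude $\mathcal{B}_{A,\eta}\pm\Omega\geq 0$; that conclusion requires $\tilde L(\mathcal{B}_{A,\eta}\pm\Omega)\leq 0$, i.e.\ a \emph{supersolution}. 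Your own computation shows $\tilde L W_\alpha=(\alpha^2\tanh^2 v+(2-\alpha)\cosh^{-2}v)W_\alpha>0$, so $W_\alpha$ (and likewise $\cosh v$) is a strict \emph{subsolution}, which is exactly the wrong object for a barrier. Second, this is not fixable by choosing a cleverer barrier: on a half-cylinder there is \emph{no} positive supersolution of $\partial_{vv}$ (hence none of $\tilde L$ restricted to the rotationally invariant mode) that tends to zero, since a positive concave function on $[V_0,\infty)$ cannot decay to $0$. This reflects the fact that the $u$-independent mode of the homogeneous equation has solutions $1$ and $v$ (equivalently $\tanh v$ and $1-v\tanh v$ for the Jacobi operator), neither of which decays: the decay of $\Omega_0$ is therefore not a consequence of any comparison principle but must be extracted by explicitly integrating the ODE $\Omega_0''=\Gamma_0$ and using the hypothesis $\Omega=o(1)$ at \emph{both} ends to kill the $\lambda+\mu v$ part. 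A barrier argument sees only the boundary circles of the truncated slab and cannot perform this cancellation.

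The correct mechanism is the one you relegate to the ``main obstacle'' paragraph, and it is essentially the paper's proof: decompose in Fourier modes in $u$; for $j\geq 1$ the operator $\partial_{vv}-j^2$ has indicial roots $\pm j$ and variation of constants (or, if you prefer, a genuine supersolution $e^{-\alpha|v|}$ with $\alpha<1\leq j$) gives decay $\cosh^{-\eta}(v)$ with norms summable in $j$; for $j=0$ one integrates twice against the decaying source and invokes $\Omega_0=o(1)$. The paper simplifies matters by absorbing $2\cosh^{-2}(v)\Omega$ into the right-hand side, so the homogeneous solutions are literally $1,v$ and $e^{\pm jv}$; it then sums the modes in the weighted space $\mathscr{H}^0_{-(\eta-\eta')}$ and upgrades to $\mathscr{C}^2_{(-1)^+}$ via elliptic regularity and the weighted Sobolev embedding, rather than via interior Schauder estimates mode by mode. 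If you promote your Fourier route to the main argument and supply the uniform-in-$j$ estimates needed to sum the series, your proof becomes complete and coincides with the paper's; as written, the headline barrier argument fails.
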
	
            	
            	\begin{remark}\label{rem:comparBR15}
            		The reader may want to compare this result with similar ones that have been obtained for \textsl{Cartesian} minimal graphs in asymptotically flat spaces (see Appendix A in \cite{Car14a} and \cite{BR15}). In particular, Bernard and Rivi\`ere recently proved that (in such setting) a complete minimal surface of finite total curvature and sub-quartic area growth can be decomposed, outside a compact set, in a finite number of graphs with an expansion of the form $p(x')=a\log|x'|+b+q(x')$ where $q(x')=O_2(|x'|^{-1+\varepsilon})$ for all $\varepsilon>0$ (this is Corollary I.2 in \cite{BR15}). The very same conclusion holds true for minimal surfaces of finite Morse index (with no assumption on the area growth), at least if the scalar curvature of the ambient manifold is non-negative (Lemma 14 in \cite{Car14a}). The methods employed in \cite{BR15} are very general and analytically robust, yet significantly less elementary than the ones presented here. 
            	\end{remark}	
            	
            	\begin{proof}
            		
            		Let us start by writing down the partial differential equation solved by $\Omega$. First of all, the minimality assumption, namely the assumption that the mean curvature $H_{F^\Omega_c}^{g^{m,e}}$ vanishes identically, implies (by virtue of Lemma \ref{lem:meancchange} combined with Remark \ref{rem:len2ndff}) the condition $H_{F^\Omega_c}^{g^{m}}(u,v)=O(c^{-3}\cosh^{-3}(v))$. At this stage, we would like to write $H_{F^\Omega_c}^{g^{m}}(u,v)$ as the sum of $H_{F^\Omega_c}^{\de}(u,v)$ plus error terms depending on the decay rate of the metric $g^{m}$. This is accomplished by means of Lemma \ref{lem:confchange}, combined with the estimates \eqref{eq:fomegabound} (see Remark \ref{rem:fbounds}). Let us discuss this computation in detail: keeping in mind the equation for the conformal change of mean curvature, it is enough to combine the identity
            			\begin{align}\label{eq:splitdiff} 
            			\textbf{n}_{F_c^{\Omega}}^{\de}\cdot\partial\log f(F_c^{\Omega}(u,v))&= \textbf{n}_{F_c}^{\delta} \cdot \partial \log f(F_c(u,v))+(\textbf{n}_{F_c^{\Omega}}^ {\delta}-\textbf{n}_{F_c}^{\de}) \cdot \partial \log f(F_c (u,v)) \nonumber \\
            			&+ \textbf{n}_{F_c^\Omega}^\delta \cdot  (\partial \log f(F_c^{\Omega}(u,v))-\partial \log f( F_c(u,v))) 
            			\end{align}
            			with
            			\begin{itemize}
            			 \item{the equation $	\textbf{n}_{F^\Omega_c}^{\de}=\textbf{n}_{F_c}^{\de} + \frac{1}{c\cosh(v)}\cL^{(1)}(\Omega) + \frac{1}{c^2\cosh^2(v)}\cQ^{(1)}(\Omega)$ (derived in Lemma \ref{lem:FFhat});}
            			 \item{the estimate
            			\[
            			|\partial \log f(F_c^{\Omega}(u,v))-\partial \log f( F_c(u,v))|\leq\frac{\mathcal{L}^{(0)}(\Omega)}{|F_c(u,v)|^3}+\frac{\mathcal{Q}^{(0)}(\Omega)}{|F_c(u,v)|^4}; 
            			\]
            		}
            			\item{the explicit calculation of the first (and leading) term: one has $\partial \log f=\frac{\partial f}{f}$ and
            	$\partial f(x)=-\frac{m}{2} \frac{x}{|x|^3}$
            			which, evaluated on the unperturbed catenoid $F_c$ defined in \eqref{eq:catc} gives
            			\begin{equation}\label{eq:partialfFc}
            			\partial f(F_c(u,v))=-\frac{m}{2c^2} \frac{(\cosh(v) \cos(u), \cosh(v) \sin(u), v)}{|\cosh^2(v)+v^2|^{3/2}}
            			\end{equation}
            			and hence (recalling the explicit expression \eqref{eq:defn} of $\textbf{n}_{F_c}^{\delta}$)
            			\begin{equation}\label{eq:NdotpartialfFc}
            			\textbf{n}_{F_c}^{\delta} \cdot \partial f(F_c(u,v))=-\frac{m}{2c^2} \frac{1-v \tanh (v)} {|\cosh^2(v)+v^2|^{3/2}},
            			\end{equation}
            		}
            			\end{itemize}
            			to finally get 
            					 \be\label{eq:intermedH}
            			 H_{F^\Omega_c}^{g^{m}}(u,v)=H_{F^\Omega_c}^{\de}(u,v)-\frac{2m}{c^2}\frac{1-v\tanh(v)}{|\cosh^2(v)+v^2|^{3/2}}+O(c^{-3}\cosh^{-3}(v)).
            			 \ee
            			 At this stage, as we had explained at the beginning of this proof the minimality assumption implies that in fact  
            					 \[
            					 H_{F^\Omega_c}^{\de}(u,v)=\frac{2m}{c^2}\frac{1-v\tanh(v)}{|\cosh^2(v)+v^2|^{3/2}}+O(c^{-3}\cosh^{-3}(v)).
            					 \]
            			and thus if we plug-in \eqref{eq:hHH} and \eqref{eq:defLFc} and multiply by $c^2\cosh^2(v)$  we derive for $\Omega$ the equation
            			\[
            			\frac{\partial^2 \Omega}{\partial u^2}+\frac{\partial^2\Omega}{\partial v^2}= \Gamma (u, v), \ \textrm{where} \ \Gamma\in \mathscr{C}^{0}_{(-1)^+}.
            			\]
            			Now, this can be analyzed by separation of variables. We shall outline the argument in order to make our proof self-contained, in spite of the fact that somewhat similar discussions are certainly present in the literature (see e.\,g.\ \cite{Mey63}) even though in a slightly different setting and often with less elementary tools. 
            			
            			%Let us observe that, by Liouville's theorem, the equation above can have at most one \textsl{bounded} solution: we will now exhibit (in fact: construct) one that decays exponentially along the ends of $S^1\times\mathbb{R}$ and so such conclusion will be true for $\Omega$.
            			
            			Given any $\eta\in (0,1)$, it follows from the equation above that $\Gamma \in\mathscr{H}^0_{-\eta}(S^1\times\mathbb{R})$ and standard Fourier analysis provides (continuous) functions $\Gamma_0, \Gamma'_j, \Gamma''_j :\mathbb{R}\to\mathbb{R}, (j\geq 1)$ all belonging to $\mathscr{H}^0_{-\eta}(\mathbb{R})$ such that
            			\[
            			\Gamma=\Gamma_0+\sum_{j\geq 1}(\Gamma'_j \Sigma'_j+\Gamma''_j\Sigma''_j) \ \ \textrm{in} \ \mathscr{H}^0_{-\eta}(S^1\times\mathbb{R})
            			\]
            			for $\Sigma'_j=\cos(j u), \Sigma''_j=\sin(j u)$. These functions are defined by integration in the angular variable $u$, namely by means of the well-known formulae
            			\[
            			\Gamma_0(v)=\frac{1}{2\pi}\int_{S^1}\Gamma(u,v)\,du, \ \Gamma'_j(v)=\frac{1}{\pi}\int_{S^1}\Gamma(u,v)\Sigma'_j(u)\,du, \ \Gamma''_j(v)=\frac{1}{\pi}\int_{S^1}\Gamma(u,v)\Sigma''_j(u)\,du,
            			\]
            			and the integral decay (in the sense of the weighted Hilbertian Sobolev spaces above) follows at once by means of Parseval's identity, since such decomposition is patently orthogonal.
            		Furthermore, the assumptions on $\Omega$ (namely the fact that we are postulating $\Omega\in\mathscr{C}^2_b(S^1\times\mathbb{R})$) ensure that its Fourier coefficients $\Omega_0, \Omega'_j, \Omega''_j$ all belong to $\mathscr{C}^2_b(\mathbb{R})$ and, exploiting orthogonality once again in the usual way, solve the family of ODEs given by
            			\be\label{eq:famodes}
            			\frac{d^2\Omega_0}{dv^2} = \Gamma_0, 
            			\frac{d^2\Omega'_j}{dv^2}-j^2\Omega'_j = \Gamma'_j   \ \textrm{and} \
            			\frac{d^2\Omega''_j}{dv^2}-j^2\Omega''_j = \Gamma''_j   \ \textrm{for} \ j\geq 1.
            	\ee
            	These equations can be studied by means of the method of variations of constants (since, obviously, the homogeneous problem can be integrated explicitly). Let us see this argument in detail:
            	\begin{itemize}
            	\item{when $j=0$, the space of solutions of the homogeneous problem is $\langle 1, v\rangle_{\mathbb{R}}$, the Wronskjan determinant is given by $W(v)=1$ and one can write (for some constants $A_0, B_0\in\mathbb{R}$) 
            		\[
            		\Omega_0(v)=v\left[A_0-\int_{0}^{v}\Gamma_0(w)\,dw \right]+1\left[B_0+\int_0^{v}w\Gamma_0(w)\,dw\right].
            		\] 
            		For what concerns the analysis as one lets $v\to+\infty$ one just needs to observe that
            		\[
            		\int_v^{+\infty}|\Gamma_0(w)|\,dw\leq\|\Gamma_0\|_{\mathscr{H}^0_{-\eta}}\left(\int_{v}^{+\infty}\cosh^{-2\eta}(w)\,dw\right)^{1/2}\leq C\|\Gamma_0\|_{\mathscr{H}^0_{-\eta}}\cosh^{-\eta}(v)
            		\]
            		and
            		\begin{align*} 
            		\int_v^{+\infty}|w\Gamma_0(w)|\,dw&\leq \|\Gamma_0\|_{\mathscr{H}^0_{-\eta}}\left(\int_{v}^{+\infty}w^2\cosh^{-2\eta}(w)\,dw\right)^{1/2} \\ \nonumber
            		&\leq C_{\eta'}\|\Gamma_0\|_{\mathscr{H}^0_{-\eta}}\cosh^{-(\eta-\eta')}(v) \ \ \forall \ \eta'\in (0,\eta/4)
            		\end{align*}
            		both gotten by means of the Cauchy-Schwarz inequality, so that in the end
            		\[
            		\Omega_0(v)=\lambda_{+}+\mu_{+}v+O(\cosh^{-(\eta-\eta')}(v)).
            		\]
            		At that stage, we can infer $\lambda_{+}=\mu_{+}=0$ thanks to the fact that $\Omega_0(u,v)=o(1)$ as one lets $v\to+\infty$.
            		An identical analysis can be performed as one lets $v\to-\infty$.
            		 }	
            	\item{when $j \geq 1$, the space of solutions of the homogeneous problem is $\langle e^{jv}, e^{-jv} \rangle_{\mathbb{R}}$, the Wronskjan determinant is given by $W(v)=2j$ and one can write (for some constants $A'_j, B'_j$ and $A''_j, B''_j$)
            		\[
            		\Omega'_j(v)=\frac{e^{jv}}{2j}\left[A'_j-\int_{0}^{v}e^{-jw}\Gamma'_j(w)\,dw\right]+\frac{e^{-jv}}{2j}\left[B'_j+\int_{0}^{v}e^{jw}\Gamma'_j(w)\,dw\right],
            		\]
            			\[
            			\Omega''_j(v)=\frac{e^{jv}}{2j}\left[A''_j-\int_{0}^{v}e^{-jw}\Gamma''_j(w)\,dw\right]+\frac{e^{-jv}}{2j}\left[B''_j+\int_{0}^{v}e^{jw}\Gamma''_j(w)\,dw\right].
            			\]
            			This implies, by virtue of the boundedness of $\Omega'_j$ (resp. $\Omega''_j$), that 
            			\[
            			A'_j=\int_0^{+\infty}e^{-jw}\Gamma'_j(w)\,dw, \ \ B'_j=\int_{-\infty}^{0}e^{jw}\Gamma'_j(w)\,dw,
            			\]
            			(resp. $A''_j=\int_0^{+\infty}e^{-jw}\Gamma''_j(w)\,dw, \ B''_j=\int_{-\infty}^{0}e^{jw}\Gamma''_j(w)\,dw$).

            			At this stage, it follows at once that
            			\[
            			|\Omega'_j(v)|\leq C \frac{\|\Gamma'_j\|_{\mathscr{H}^0_{-\eta}}}{2j}\left(\frac{1}{\sqrt{2(j+\eta)}}+\frac{1}{\sqrt{2(j-\eta)}} \right)\cosh^{-\eta}(v)
            			\]
            			where, of course, $C$ is a positive constant which does not depend on $j$.}	
            	\end{itemize}	
            	Now, we recollect all of the terms above: for $j\geq 1$, we first observe that
            	\[
            	\int_{\mathbb{R}}|\Omega'_j|^2\cosh^{2(\eta-\eta')}(w)\,dw \leq C \frac{\|\Gamma'_j\|^2_{\mathscr{H}^0_{-\eta}}}{j^2(j-\eta)}	\]	(and identically for the terms $\Omega''_j$) so that in the end the series
            	\[
            	\sum_{j\geq 1}\left( \Omega'_j(v)\Sigma'_j(u)+\Omega''_j(v)\Sigma''_j(u)\right)
            	\]	does converge in $\mathscr{H^{0}}^{0}_{-(\eta-\eta')}(S^1\times\mathbb{R})$, and as a result  $\Omega\in \mathscr{H}^{0}_{-(\eta-\eta')}(S^1\times\mathbb{R})$ which holds true for every $\eta\in (0,1)$ and every $\eta'\in (0,\eta/4)$. Hence, elliptic regularity ensures that $\Omega\in \mathscr{H}^{2}_{-(\eta-\eta')}(S^1\times\mathbb{R})$, the Sobolev embedding Theorem (for weighted Sobolev spaces, over asymptotically cylindrical manifolds) provides $\Omega\in \mathscr{C}^{0}_{-(\eta-2\eta')}(S^1\times\mathbb{R})$ and thus the arbitrarity of $\eta'$ and $\eta$ forces $\Omega\in \mathscr{C}^{0}_{(-1)^+}$, hence in fact $\Omega\in \mathscr{C}^{2}_{(-1)^+}$ which completes the proof.
            	
            	\end{proof}	
            	
            	Given the expansion provided by Lemma \ref{lem:expansion} (and the corresponding improved estimates one can derive), we can legitimately perform integration by parts (over $S^1\times\mathbb{R}$) of the term $\psi_0 L_{F^\Omega_c}(\Omega)$ under the sole assumption that $\Omega(u,v)=o(1)$ as $v\to\pm\infty $, where (as above) $\psi_0(u,v)=1-v\tanh(v)$, the Jacobi function associated to the rescaling action on the catenoid, namely $\psi_0=\textbf{n}^{\de}_{F_c}\cdot\frac{\partial F_c}{\partial c}$. As a result, we have the following conclusion.
            	
            	\begin{lemma}\label{lem:IntParts}
            		Let $\psi_0:S^1\times \R \to \R$, $\psi_0(u,v):=1- v \, \tanh(v)$. For every $c>r_0$ and $\Omega \in \mathscr{C}^2_b(S^1\times \mathbb{R})$ defining function of a minimal immersion, such that $\Omega(u,v)=o(1)$ as $v\to\pm\infty$, one has
            		\begin{equation}
            		\int_{S^1\times \R} \psi_0 \; L_{F_c}(\Omega) \; c^2\cosh^2(v) \; du \, dv=0.
            		\end{equation}
            	\end{lemma}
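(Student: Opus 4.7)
The strategy is to apply Green's identity and exploit the fact that $\psi_0$ is precisely a Jacobi field of the Euclidean catenoid, with boundary terms at infinity controlled by the improved decay provided by Lemma \ref{lem:expansion}.

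First I would unpack \eqref{eq:defLFc}: the factor $c^2\cosh^2(v)$ cancels the conformal prefactor in $L_{F_c}^{\de}$, so the claim reduces to
\[
\int_{S^1\times \R}\psi_0\left(\frac{\p^2\Omega}{\p u^2}+\frac{\p^2\Omega}{\p v^2}+\frac{2\Omega}{\cosh^2(v)}\right)du\,dv=0.
\]
The crucial algebraic fact is that $\psi_0$ itself solves the homogeneous equation
\[
\frac{\p^2\psi_0}{\p u^2}+\frac{\p^2\psi_0}{\p v^2}+\frac{2\psi_0}{\cosh^2(v)}=0,
\]
which can be checked by direct differentiation (the $u$-derivatives vanish since $\psi_0$ depends only on $v$, reducing to a one-variable identity), but is more transparently explained by the geometric identity $\psi_0=\textbf{n}^{\de}_{F_c}\cdot \p_c F_c$: rescaling generates a one-parameter family of Euclidean minimal surfaces, so the normal component of its infinitesimal variation lies in the kernel of $L_{F_c}^{\de}$.

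Second, on each truncated cylinder $S^1\times [-L,L]$ I would apply Green's second identity for the flat Laplacian in the variables $(u,v)$. Since $S^1$ has no boundary, only the $v$-boundaries contribute, and swapping $\Omega$ with $\psi_0$ produces
\[
\int_{S^1\times [-L,L]}\!\psi_0\left(\Omega_{uu}+\Omega_{vv}+\tfrac{2\Omega}{\cosh^2(v)}\right)du\,dv
=\int_{S^1\times [-L,L]}\!\Omega\left(\psi_{0,uu}+\psi_{0,vv}+\tfrac{2\psi_0}{\cosh^2(v)}\right)du\,dv+\mathrm{B}_L,
\]
where the boundary term is
\[
\mathrm{B}_L=\left[\int_{S^1}\bigl(\psi_0\,\p_v\Omega-\p_v\psi_0\cdot \Omega\bigr)\,du\right]_{v=-L}^{v=L}.
\]
The interior contribution on the right vanishes identically by the Jacobi equation above, so the lemma is reduced to showing $\mathrm{B}_L\to 0$ as $L\to\infty$.

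Third, and this is where all the hypotheses enter, I would invoke Lemma \ref{lem:expansion} to upgrade the pointwise assumption $\Omega(u,v)=o(1)$ into the full membership $\Omega\in\mathscr{C}^2_{(-1)^+}$, namely the bounds $|\Omega|+|\n\Omega|+|\n^2\Omega|=O(\cosh^{-1+\ve}(v))$ for every $\ve>0$. Since $\psi_0(u,v)=O(|v|)$ and $\p_v\psi_0=-\tanh(v)-v\,\text{sech}^2(v)$ stays bounded, the integrand of $\mathrm{B}_L$ is pointwise controlled by $C|v|\cosh^{-1+\ve}(v)$ uniformly in $u$, which tends to $0$ as $|v|\to\infty$. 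Hence $\mathrm{B}_L\to 0$ and the identity is established.

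The main obstacle is precisely the legitimacy of this integration by parts: the bare hypothesis $\Omega=o(1)$ cannot beat the linear growth of the Jacobi field $\psi_0$ in the boundary expression, and one genuinely needs the quantitative decay $\Omega=O(\cosh^{-1+\ve}(v))$ furnished by Lemma \ref{lem:expansion}. This is exactly the reason why that lemma is proved beforehand and invoked here as a black box.
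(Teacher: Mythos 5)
Your proof is correct and follows exactly the route the paper intends: the paper states this lemma as an immediate consequence of the preceding paragraph, which says precisely that Lemma \ref{lem:expansion} legitimates the integration by parts of $\psi_0\,L_{F_c}(\Omega)$ against the Jacobi field $\psi_0=\textbf{n}^{\de}_{F_c}\cdot\frac{\partial F_c}{\partial c}$. Your write-up simply supplies the details the paper leaves implicit (the verification that $\psi_0''+2\psi_0/\cosh^2(v)=0$, Green's identity on truncated cylinders, and the decay of the boundary terms against the linear growth of $\psi_0$), and it is sound.
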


            	\subsection{Completing the proof of the main theorem}\label{subs:endproof}
            	
            	In this subsection, we will exploit all of the ancillary results obtained above to present a direct proof of Theorem \ref{thm:main}.
            	
            	\begin{proof}
            		Let $e_{(\cdot)}:[0,\varepsilon_0] \to \mathscr{E}^{2}(\R^3\setminus B_{r_0})$  be a continuous curve differentiable at $m=0$ with $e_0=0$, and consider   $g^{m,e_m}_{ij}:=\left(1+\frac{m}{2|x|} \right)^4 \delta_{ij} + (e_m(x))_{ij}$ the corresponding  perturbation of the Schwarzschild  metric. 
            		Assume by contradiction that there exists a  continuous curve $\Omega_{(\cdot)}:[0,\varepsilon_0] \to \mathscr{C}^{2}_{b}(S^1\times \R)$, $m\mapsto \Omega_m$   differentiable at $m=0$ with $\Omega_0=0$ such that, for some  $c\geq \bar{c}>1$ and $m\in [0,\varepsilon_0]$, the immersion $F^{\Omega_m}_c$ defined by \eqref{eq:norgraph}  is minimal in $(\R^3\setminus B_1(0), g^{m,e_m})$. We will fix the value of $\overline{c}$ later along the course of the proof.
            		
            		We shall start by multiplying $H^{g^{m,e_{m}}}_{F^{\Omega_{m}}_{c}}\equiv 0$  by  $
            		f^2(F^{\Omega}_c(u,v))\psi_0(v)$ and integrating: thereby we get
            		\begin{align}
            		0&= \int_{S^{1}\times \R}  f^2(F^{\Omega}_c(u,v))H^{g^{m,e_{m}}}_{F^{\Omega_{m}}_{c}} \, \psi_{0} \, dvol_{F_c}^{\delta} \nonumber \\
            		&=    \int_{S^{1}\times \R} f^2(F^{\Omega}_c(u,v)) H^{g^{m}}_{F^{\Omega_{m}}_{c}} \, \psi_{0} \, dvol_{F_c}^{\delta} +   \int_{S^{1}\times \R}   O(m c^{-3} \cosh^{-3}(v))  \, \psi_{0} \, dvol_{F_c}^{\delta}   \nonumber \\
            		&=    \int_{S^{1}\times \R}  f^2(F^{\Omega}_c(u,v))H^{g^{m}}_{F^{\Omega_{m}}_{c}} \, \psi_{0} \, dvol_{F_c}^{\delta} + O(m c^{-1})    \label{eq:mc-alpha}
            		\end{align}
            		where we have used (for what concerns the second summand) the preliminary estimates contained in Remark \ref{rem:len2ndff}.
            		At this stage, specializing Lemma \ref{lem:confchange} to  the case when $f=1+\frac{m}{2|x|}$ and $S=F_c^{\Omega_m}(S^1\times\mathbb{R})$ we get
            		 \begin{equation}\label{eq:contrad}
            		 \int_{S^1 \times \R} H^{\de}_{F^{\Omega_{m}}_{c}} \, \psi_0 \, dvol_{F_c}^{\delta}=O(mc^{-1})  -4 \int_{S^1\times \R} \psi_0 \,  \textbf{n}_{F_c^{\Omega_m}}^{\delta}(u,v) \cdot \partial\log f (F^{\Omega_m}_c(u,v)) \, dvol_{F_c}^{\delta}.
            		 \end{equation}
            		 We are going to show that for small mass the left-hand side is of  order $m^2$ while the right-hand side is of order $m$; this will give the contradiction, for $m>0$ small enough, and will prove Theorem \ref{thm:main}. To this aim we compute separately the two sides of \eqref{eq:contrad}, starting with the left one.
            		 
            		 Since by assumption $[0,\varepsilon_0]\ni m\to \Omega_m \in \mathscr{C}^2_b(S^1\times \R)$ is a continuous curve differentiable at $m=0$ with $\Omega_0=0$, up to choosing a smaller $\varepsilon_0>0$, we have 
            		 \begin{equation}\label{eq:OmegaCm}
            		 \| \Omega_m \|_{\mathscr{C}^2_b} \leq C\, m \quad \forall \ m\in [0,\varepsilon_0], 
            		 \end{equation}
            		 for some $C>0$ depending on the curve $\Omega_{(\cdot)}$ but not on $m$. Therefore, the very definition \eqref{est:Qk} of  $\cQ^{(2)}$  gives
            		 \begin{equation}\label{eq:estQ2Om}
            		 \| \cQ^{(2)}(\Omega_m) \|_{\mathscr{C}^{0}_{b}} \leq C \| \Omega_m \|_{\mathscr{C}^2_b}^2 \leq   C m^2 \quad \forall \ m\in [0,\varepsilon_0].
            		 \end{equation}
            		 Putting together Lemma \ref{lem:FFhat} (equation \eqref{eq:hHH})
            		  with Lemma \ref{lem:IntParts} (whose applicability relies on the assumption that, for each fixed $m$, one has $\Omega_m(u,v)=o(1)$ for $v\to\pm\infty$),  we infer that for every  $m\in [0,\varepsilon_0]$ 
            		 \begin{align}
            		 \left| \int_{S^1 \times \R}  H_{F_c^{\Omega_m}}^{\delta} \, \psi_0 \, dvol_{F_c}^{\delta} \right| &\leq  \left| \int_{S^1 \times \R}   L^{\de}_{F_c}(\Omega_m) \, \psi_0 \, dvol_{F_c}^{\delta} \right|  + \left|  \int_{S^1 \times \R}  \frac{\psi_0(v)}{c \cosh(v)}\cQ^{(2)}(\Omega_m) \, du dv \right| \nonumber \\
            		 &=   \left|  \int_{S^1 \times \R}  \frac{\psi_0(v)}{c \cosh(v)}\cQ^{(2)}(\Omega_m) \, du dv \right|   \leq C \frac{m^2}{c}, \label{eq:EstLHS}
            		 \end{align}
            		 where in the last estimate we used  \eqref{eq:estQ2Om}.
            		 
            		 	We now compute the right-hand side of \eqref{eq:contrad}. First of all let us rewrite it isolating the main term and the remainders:
            		 	\begin{align}
            		 	\int_{S^1\times \R} \psi_0 \,  \textbf{n}_{F_c^{\Omega_m}}^{\delta} &\cdot \partial \log f(F_c^{\Omega_m}(u,v)) \, dvol_{F_c}^{\delta}=\int_{S^1\times \R} \psi_0 \,  \textbf{n}_{F_c}^{ \delta} \cdot \partial \log f( F_c(u,v)) \, dvol_{F_c}^{\delta} \nonumber \\
            		 	&+ \int_{S^1\times \R} \psi_0 \,  ( \textbf{n}_{F_c^{\Omega_m}}^{\delta}-\textbf{n}_{F_c}^{\delta}) \cdot \partial \log f(F_c (u,v)) \, dvol_{F_c}^{\delta} \nonumber \\
            		 	&+ \int_{S^1\times \R} \psi_0 \,  \textbf{n}_{F_c^{\Omega_m}}^{\delta} \cdot  (\partial \log f(F_c^{\Omega_m}(u,v))-\partial \log f( F_c(u,v)))  \, dvol_{F_c}^{\delta}. \label{eq:RHS1} 
            		 	\end{align}
            		 	We claim that the first integral is the main term (of order $m$) and the last two lines are of order $m^2$. Recalling \eqref{eq:NdotpartialfFc}, the first integral in \eqref{eq:RHS1} can be written more explicitly as
            		 	\be\label{eq:1integral0}
            		 	\int_{S^1\times \R} \psi_0 \,  \textbf{n}_{F_c}^{\delta} \cdot \partial \log f( F_c(u,v)) \, dvol_{F_c}^{\delta} =- \frac{m}{2} \int_{S^1\times \R}     \frac{(1-v \tanh(v))^2 \; \cosh^2(v)}   {|\cosh^2(v)+v^2|^{3/2}  f(F_c(u,v)) } \; du dv.
            		 	\ee
            		 	
            		 	As a result, we get
            		 	\begin{equation}\label{eq:1integral}
            		 	\left| \int_{S^1\times \R} \psi_0 \,  \textbf{n}_{F_c}^{\delta} \cdot \partial \log f( F_c(u,v)) \, dvol_{F_c}^{\delta} + A \, m\right| \leq C\, \frac{m^2}{c}, 
            		 	\end{equation}
            		 	where we have set 
            		 	\begin{equation}\label{eq:defA}
            		 	A:= \frac{1}{2} \int_{S^1\times \R}     \frac{(1-v \tanh(v))^2 \; \cosh^2(v)}   {|\cosh^2(v)+v^2|^{3/2}  } \; du dv > 0.
            		 	\end{equation}
            		 	
            		 		With analogous estimates, using Lemma \ref{lem:FFhat}, it is not difficult to see that
            		 		\begin{align}
            		 		&\left| \int_{S^1\times \R} \psi_0 \,  ( \textbf{n}_{F_{c}^{\Omega_m}}^{\delta}-\textbf{n}_{F_c}^ {\delta}) \cdot \partial \log f(F_c (u,v)) \, dvol_{F_c}^{\delta} \right| \nonumber \\
            		 		&\quad +\left| \int_{S^1\times \R} \psi_0 \,  \textbf{n}_{{F}_c^{\Omega_m}}^{\delta} \cdot  (\partial \log f(F_c^{\Omega_m}(u,v))-\partial \log f(F_c(u,v)))  \, dvol_{F_c}^{\delta}\right| \leq C \, m^2 \label{eq:2integral} 
            		 		\end{align}
            		 		the bound for the second summand relying on the fact that
            		 		\[
            		 		|\partial \log f(F_c^{\Omega_m}(u,v))-\partial \log f( F_c(u,v))|\leq m\left(\frac{\mathcal{L}^{(0)}(\Omega_m)}{|F_c(u,v)|^3}+\frac{\mathcal{Q}^{(0)}(\Omega_m)}{|F_c(u,v)|^4}\right). 
            		 		\]
            		 	Plugging  \eqref{eq:EstLHS}, \eqref{eq:RHS1},     \eqref{eq:1integral} and  \eqref{eq:2integral} into \eqref{eq:contrad} implies that if $F^{\Omega_m}_{c_m}$ is a minimal immersion into $(\R^3\setminus B_1, g^{m,e_m})$ then 
            		 	\begin{equation}\nonumber
            		 	A m \leq  C \left( m^2 + \frac{m}{c} \right),
            		 	\end{equation}
            		 	for some constant $C>0$ depending on the curves $\Omega_{(\cdot)},e_{(\cdot)}$ but not on $m$. Thus, we can certainly choose $\bar{c}$ large enough that
            		 	\[
            		 	\frac{C}{c}\leq\frac{A}{2} \ \ \ \forall \ c\geq \bar{c}.
            		 	\] 
            		 	Hence we should have for our sequence $\left\{m_i\right\}$ of positive numbers converging to zero
            		 	\[
            		 	\frac{A}{2}m_i \leq C m^2_i 
            		 	\]
            		 	which is definitely not possible when $i$ is large enough. This contradiction completes the proof.

            		            	\end{proof}	
            		            	
            		            	%We shall conclude this section with a few remarks concerning the proof we have presented, and possible extensions thereof.
            		            	
            		            	\begin{remark}
            		            	As should be clear to the reader, the argument we have described above is of \textsl{global} nature since it relies on integration by parts and on the fact that the function $\psi_0$ defines a Jacobi field on the catenoid. It is then natural to wonder whether a modification (or, rather, an improvement) of such argument, local with respect to a given end (of the surface), can be produced in order to rule out the existence of catenoidal ends for, say, complete minimal surfaces of finite Morse index or finite total curvature (cmp. Remark \ref{rem:comparBR15}). That does not seem to be the case. In fact, following this strategy one can only rule out the existence of catenoidal-type ends with \textsl{Euclidean} expansion\footnote{It is well-known \cite{Sch83} that a minimal end of finite total curvature in (flat) $\mathbb{R}^3$ has an expansion of the type above, as can be seen either via a suitable Weierstrass parametrization or, directly, from the minimal surface equation in Euclidean coordinates. Such conclusion is simply not true for general asymptotically Schwarzschildean metrics, see \cite{Mey63} for a broad discussion of the asymptotic expansion of solutions to elliptic equations.}, namely of every end that can be written (in suitable asymptotically flat coordinates $\left\{x\right\}$) as an outer-graph of the form
            		            	\be\label{eqref:euclexp}
            		            	p(x')=a\log|x'|+b+O(|x'|^{-1}) \ \textrm{where} \ x'=(x^1,x^2)
            		            	\ee
            		            	for some $a\neq 0$. For indeed, if that were the case, a rather elementary argument would allow (possibly changing the asymptotically flat coordinates) to parametrize the end in question by means of a map $F^{\Omega}_c: S^1\times [0,\infty)\to\mathbb{R}^3$ where $\Omega\in\mathscr{C}^{k}_{-1}$ and hence the condition that $H_{F_c^{\Omega}}^{g^{m,e_m}}\equiv 0$ would imply

            		            		\[
            		            		\frac{1}{c^2 \cosh^2(v)} \left(\frac{\partial ^2 \Omega} { \partial u^2} +  \frac{\partial ^2 \Omega} { \partial v^2} +\frac{2 \Omega} { \cosh^2(v)} \right)+O(\cosh^{-5}(v))=\frac{2m}{c^2}\frac{1-v\tanh(v)}{(\cosh^2(v)+v^2)^{3/2}}+O(\cosh^{-3}(v))
            		            		\]
            		            		thus, due to the decay assumptions on $\Omega(u,v)$
            		            		\[
            		            		\frac{2m}{c^2}\frac{1-v\tanh(v)}{(\cosh^2(v)+v^2)^{3/2}}=O(\cosh^{-3}(v))
            		            		\]
            		            		so that, multiplying by $\cosh^3(v)$ and rearranging terms (based on the assumption that $m>0$)
            		            		\[
            		            		v=O(1) \ \ \textrm{for} \ |v|\to\infty
            		            		\]	
            		            		which is a patent contradiction. Yet, this argument does not go through in case one only knows that the parametrization holds with $\Omega\in \mathscr{C}^{k}_{(-1)^{+}}$ which is all one can expect for the general class of metrics we are dealing with.
            		            		            	\end{remark}

            	\section{Higher-dimensional minimal catenoids in asymptotically Schwarzschildean spaces}\label{sec:app}
            	
            The goal of this section is to show that the conclusion of Theorem \ref{thm:main} is strictly three-dimensional, as in higher dimensions one can indeed perturb Euclidean catenoids of large neck in order to obtain catenoidal minimal hypersurfaces in asymptotically Schwarzschildean spaces.
            
            As mentioned in the introduction, for the sake of brevity we shall restrict our discussion to radially symmetric (asymptotically flat) metrics on $\mathbb{R}^{n+1}\setminus B_{r_0}$. More precisely, given $k\geq 2$ we denote with $\mathscr{M}^{k}$ the closure of the space of $\mathscr{C}^k$ maps from $[r_0,\infty)$ into $(n+1)\times (n+1)$ real symmetric matrices such that the following norm
            \be\label{eq:normg} 
            \|h\|_{\mathscr{M}^{k}}:= \sum_{i,j=1}^{n+1}\ \sum_{0\leq a\leq k} \ \sup_{r\geq r_0} \left|\frac{d^a h_{ij}}{dr^a}\right| r^{n-1+a} 
           \ee
            is finite.

             In order to apply an Implicit Function Theorem type argument, one has to understand the kernel of $L_{F_c}^{\de}$. It is readily seen that the only Jacobi fields\footnote{Let us agree to describe each Jacobi field (say $\textbf{v}$) by means of the associated function $J=\textbf{n}^{\de}_{F_c}\cdot\textbf{v}$ that belongs to the kernel of $L_{F_c}^{\de}$.} on $\textbf{Cat}_c$ \textsl{not depending on the angular variable $\theta$} are:

            \begin{itemize}
            	\item the function
            	\begin{equation}\label{eq:defJtrans}
            	S^1_0(s) := \frac{\dot{\phi}(s)}{\phi(s)}.
            	\end{equation}
            	which is associated to the translation in the $\mathbb{R}$ factor in $\mathbb{R}^{n+1}=\mathbb{R}^{n}\times\mathbb{R}$;
            	\item the function
            	\begin{equation}\label{eq:defJ0}
            	S^2_{0}(s):= \frac{\psi(s) \dot{\phi}(s)}{\phi(s)}-\frac{1}{\phi^{n-2}(s)}.  
            	\end{equation}
            	which is associated to dilations (centered at the origin) in $\mathbb{R}^{n+1}$.
            \end{itemize}

            	We shall work with the \textsl{exponentially} weighted functional spaces $\mathscr{C}^{k}_q(S^{n-1}\times\mathbb{R})$  (see Section \ref{sec:setup}) in the very special case $n=1$, in which case those definitions above are meant to describe (via straightforward modifications) the corresponding functional spaces over the real line, namely the spaces  $\mathscr{C}^{k}_q(\mathbb{R})$. We remark that  $\mathscr{C}^{k}_q(\mathbb{R})$ can be canonically identified with the subspace of functions in $\mathscr{C}^{k}_q(S^{n-1}\times\mathbb{R})$ not depending on $\theta$ by means of the trivial map
            	\[
            	\Pi: \mathscr{C}^{k}_q(\mathbb{R}) \to \mathscr{C}^{k}_q(S^{n-1}\times\mathbb{R}), \ \Pi(f)(\theta,s)=f(s)
            	\]
            	 and such identification will always be implicit in the sequel of this section.

            	We consider the operator
            	\[
            	T:=\frac{\partial^2}{\partial s^2}+(n-2)\frac{\dot{\phi}}{\phi}\frac{\partial}{\partial s}+\Delta_{S^{n-1}}+\frac{n(n-1)}{\phi^{2n-2}}
            	\]
            	(which is conjugate to $L^{\de}_{F_c}$ since in fact $L_{F_c}^{\de}=c^{-2}\phi^{-2}T$ but has the virtue of being independent of $c$).

            		If $\tilde{f}:S^{n-1}\times\mathbb{R}$ is a function of the sole variable $s$, then patently $T\tilde{f}=T_0\Pi^{-1}(\tilde{f})$ where
            		
            	\[
            		T_0:=\frac{\partial^2}{\partial s^2}+(n-2)\frac{\dot{\phi}}{\phi}\frac{\partial}{\partial s}+\frac{n(n-1)}{\phi^{2n-2}}.
            		\]

            		  We consider for $q\in (-(n-2),0)$  the mean curvature map
            		    \begin{equation}\label{eq:defmapH1}
            		    H: \mathscr{M}^{(k-1)\vee 2}\times \mathscr{C}^{k}_q(\R)\to \mathscr{C}^{k-2}_{q-2} (\R)
            		    \end{equation}
            		    given by
            		    \begin{equation}\label{eq:defmapH2}
            		    H(h, \Omega)= H_{F^{\Omega}_{c}}^{\de+h},
            		    \end{equation}
            		    that is to say the mean curvature of (the trace of) $F^{\Omega}_{c}$ in metric $g=\de+h$.  Making use of the higher dimensional analogues of Lemma \ref{lem:meancchange} and Lemma \ref{lem:FFhat}, the map in question is routinely checked to be $C^1$ (in the sense of Calculus in Banach spaces) and of course $H(0,0)=0$ as well as $D_2 H(0,0)=c^{-2}\phi^{-2}(s)T_0$. \\
            		 \indent In this section, we shall outline the proof of the following perturbative existence result:

            				\begin{proposition}\label{pro:highdim}
            					For $n\geq 3$ consider the manifold with boundary $M:=\R^{n+1}\setminus B_{r_0}$, let $k\geq 2$ and let $q\in (-(n-2),0)$. 
            					Then there exists $\varepsilon>0$ and a differentiable map $\mathscr{G}:\mathscr{B}\to \mathscr{C}^{k}_q(\R) $ defined on the ball of radius $\varepsilon$ in $\mathscr{M}^{(k-1)\vee 2}$, with the property that $\mathscr{G}(h)$ is the defining function of a catenoidal minimal hypersurface (over $\textbf{Cat}_{2r_0}$) in $(M,\de+h)$  (that is to say: the map $F^{\mathscr{G}(h)}_{2r_0}$ parametrizes a minimal hypersurface in such ambient manifold). As a result, given any $c\geq 2r_0$ and set $\Phi(x)=\left(\frac{2r_0}{c}\right)x, \ \tilde{h}(y)=h(\Phi^{-1}(y))$, if the inequality
            					\begin{align*}
            					\left(\frac{2r_0}{c}\right)^{n-1} \ \sum_{i,j=1}^{n+1}\ \sum_{0\leq a\leq k} \ \sup_{r\geq c/2}  \left|\frac{d^a h_{ij}}{dr^a}\right| r^{n-1+a} <\varepsilon
            					\end{align*}
            					is satisfied one has that
            					$F^{\mathscr{G}(\tilde{h})c}_c$ parametrizes a minimal hypersurface in $(M,\de+h)$.
            				\end{proposition}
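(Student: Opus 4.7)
The plan is to apply the Implicit Function Theorem to the $C^1$ map $H: \mathscr{M}^{(k-1)\vee 2}\times \mathscr{C}^k_q(\R)\to \mathscr{C}^{k-2}_{q-2}(\R)$ defined by \eqref{eq:defmapH1}--\eqref{eq:defmapH2}, at the base point $(h,\Omega)=(0,0)$. One has $H(0,0)=0$ by Euclidean minimality of $\textbf{Cat}_{2r_0}$, and $D_2 H(0,0) = c^{-2}\phi^{-2} T_0$ with $c=2r_0$, as recorded just before the statement. The crux is therefore to prove that
\[
T_0:\mathscr{C}^k_q(\R)\longrightarrow \mathscr{C}^{k-2}_{q-2}(\R)
\]
is an isomorphism for every $q\in (-(n-2),0)$; once this is in hand, the IFT directly produces the desired map $\mathscr{G}:\mathscr{B}\to\mathscr{C}^k_q(\R)$ solving $H(h,\mathscr{G}(h))=0$.

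For the kernel, any rotationally-invariant Jacobi field along $\textbf{Cat}_c$ is a linear combination of $S^1_0$ and $S^2_0$ from \eqref{eq:defJtrans}--\eqref{eq:defJ0}, so $\ker T_0 \subset \mathrm{span}\{S^1_0,S^2_0\}$. Since $\phi(s)\sim 2^{-1/(n-1)}e^{|s|}$, membership in $\mathscr{C}^k_q(\R)$ with $q<0$ requires decay at $|s|=\infty$. Exploiting the parities $\phi(-s)=\phi(s)$, $\dot\phi(-s)=-\dot\phi(s)$, $\psi(-s)=-\psi(s)$, the function $S^1_0=\dot\phi/\phi$ is odd with limits $\pm 1$ at $\pm\infty$; and for $n\geq 3$ the integral $\psi(s)=\int_0^s\phi^{2-n}\,d\sigma$ has finite nonzero limits $\pm\psi^\infty$ while $\phi^{-(n-2)}\to 0$, so $S^2_0$ is even with common nonzero limit $\psi^\infty$. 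Hence $c_1 S^1_0+c_2 S^2_0$ tends to $\pm c_1+c_2\psi^\infty$ at $\pm\infty$, and both limits vanish only for $c_1=c_2=0$, so $\ker T_0=\{0\}$.

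For surjectivity I would use explicit variation of parameters with the fundamental pair $u_+:=S^1_0-S^2_0/\psi^\infty$ (which decays like $e^{-(n-2)s}$ at $+\infty$ while remaining bounded and nonzero at $-\infty$) and $u_-(s):=u_+(-s)$, its $s\mapsto-s$ mirror. Abel's formula gives $W(s)\asymp \phi^{-(n-2)}(s)$, and the two-sided Green's function
\[
\Omega(s)=-u_+(s)\!\int_{-\infty}^s\!\frac{u_-(\sigma)\,g(\sigma)}{W(\sigma)}\,d\sigma-u_-(s)\!\int_s^{+\infty}\!\frac{u_+(\sigma)\,g(\sigma)}{W(\sigma)}\,d\sigma
\]
produces the desired inverse: both integrals converge for $g\in\mathscr{C}^{k-2}_{q-2}$ (using only $q<2$), and direct asymptotic tracking shows $\Omega$ decays at rate $\min(n-2,\,2-q)>-q$ at $\pm\infty$, giving $\Omega\in\mathscr{C}^0_q(\R)$; standard elliptic regularity for ODEs then upgrades this to $\mathscr{C}^k_q(\R)$.

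The rescaling assertion finally follows by homogeneity. Setting $\Phi(x)=(2r_0/c)x$ and $\tilde h(y)=h(\Phi^{-1}(y))$, a change of variables in \eqref{eq:normg} yields
\[
\|\tilde h\|_{\mathscr{M}^{(k-1)\vee 2}}=\left(\frac{2r_0}{c}\right)^{n-1}\sum_{i,j=1}^{n+1}\sum_{0\leq a\leq k}\sup_{r\geq c/2}\left|\frac{d^a h_{ij}}{dr^a}\right| r^{n-1+a}<\varepsilon,
\]
so the first part applies to $\tilde h$ and pushing forward by $\Phi^{-1}$, a homothety sending $\textbf{Cat}_{2r_0}$ to $\textbf{Cat}_c$, produces the claimed minimal normal graph over $\textbf{Cat}_c$ in $(M,\delta+h)$, the defining function being rescaled by the homothety factor. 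The principal technical obstacle of the argument is the surjectivity of $T_0$: the kernel computation reduces to the elementary parity and limit analysis above, but controlling the inverse simultaneously at both ends of $\R$ requires either the explicit two-sided Green's function just written (with its delicate convergence and decay bookkeeping) or the general Lockhart--McOwen Fredholm theory for asymptotically translation-invariant ODEs; the hypothesis $q\in (-(n-2),0)$, which forces $n\geq 3$, is precisely what keeps the weight non-critical and excludes $S^1_0,S^2_0$ from the domain of $T_0$, and is the structural reason Theorem \ref{thm:main} cannot be circumvented in three dimensions.
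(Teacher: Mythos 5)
Your proposal is correct and follows essentially the same route as the paper: reduce to the Implicit Function Theorem and invert the linearization by explicit variation of constants using the two rotationally invariant Jacobi fields $S^1_0,S^2_0$; your decaying fundamental pair $u_\pm$ and two-sided Green's function is just a repackaging of the paper's procedure of solving the $2\times 2$ linear system for the constants $A,B$ (whose determinant is $2T=2\int_0^{+\infty}\phi^{2-n}\,d\sigma>0$, i.e.\ twice your $\psi^\infty$). One bookkeeping correction: $T_0$ maps $\mathscr{C}^{k}_{q}(\R)$ into $\mathscr{C}^{k-2}_{q}(\R)$, not into $\mathscr{C}^{k-2}_{q-2}(\R)$, so the statement to prove (Lemma \ref{lem:leq0}) is that $T_0:\mathscr{C}^{k}_{q}\to\mathscr{C}^{k-2}_{q}$ is an isomorphism, the shift to weight $q-2$ being supplied by the prefactor $c^{-2}\phi^{-2}$ in $D_2H(0,0)$; consequently in your surjectivity estimate the right-hand side fed to the Green's function is $O(\phi^{q})$ rather than $O(\phi^{q-2})$ --- the integrals still close, but there one genuinely uses both endpoint restrictions $-(n-2)<q<0$ rather than merely $q<2$.
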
	
            				
            				We remark that the second assertion follows, at once, from the first by means of a geometric rescaling argument. As a result, for the rest of this section we will tacitly assume to deal with some fixed value of the neck parameter (that is to say $c=2r_0$, in the notations above). It is instructive to analyze the specialization of this result to the Schwarzschild manifold.
            				
            				\begin{example} For $c>0$ fixed, let us consider on $\mathbb{R}^{n+1}\setminus \left\{|x|<m/2\right\}$ the Riemannian metric given in these coordinates by
            					\[
            					g^{(m)}_{ij}(x)=\left(1+\frac{m}{2|x|^{n-1}}\right)^{\frac{4}{n-1}}\delta_{ij}.
            					\]
            				Of course, the \textsl{leading term} of the metric is given in polar coordinates by
            					\[
            					h^{lead}_{ij}(r)=\frac{2m}{(n-1)r^{n-1}}\delta_{ij}
            					\]	
            					and thus it is readily checked that, for any $k\geq 0$, there exists a constant $C=C_{k,n}$ such that
            					\[
            					mC^{-1}_{k,n}\leq \sum_{i,j=1}^{n+1}\ \sum_{0\leq a\leq k} \ \sup_{r\geq c/2}  \left|\frac{d^a h_{ij}}{dr^a}\right| r^{n-1+a}\leq mC_{k,n}.
            					\]
            					Hence, the smallness condition required by Proposition \ref{pro:highdim} is equivalent to an inequality of the form
            					\[
            					m C_{k,n}<\varepsilon c^{n-1}
            					\] 
            					where $\varepsilon$ does not depend either on $m$ or $c$. As a result, for any fixed $c\geq \overline{c}$ we can perform the deformation of a Euclidean catenoid provided the mass parameter is small enough and, on the other hand, for any given value of the ADM mass the same conclusion holds true when perturbing catenoids of sufficiently large neck.
            				\end{example}	
            				
            				By virtue of the Implicit Function Theorem (namely by invoking e.\,g.\ Theorem 2.3 in \cite{AP95} or Theorem 2.7.2 in \cite{Nir74}), the proof of Proposition \ref{pro:highdim} amounts to checking that, for the range of values of the parameters $k$ and $q$ stated above, the linearized operator $D_2 H: \mathscr{C}^{k}_q(\R)\to \mathscr{C}^{k-2}_{q-2}(\R)$ is an isomorphism. This is a straightforward consequence of the following assertion.

            		\begin{lemma}\label{lem:leq0}
            			For any $k\geq 2$ and $-(n-2)<q<0$ the operator $T_0:\mathscr{C}^{k}_{q}(\mathbb{R})\to\mathscr{C}^{k-2}_{q}(\mathbb{R})$ is an isomorphism.	
            		\end{lemma}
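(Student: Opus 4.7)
The plan is to establish that $T_0$ is bounded (which is immediate, since $(n-2)\dot\phi/\phi$ and $n(n-1)/\phi^{2n-2}$ together with all their derivatives are uniformly bounded on $\R$) and then to exhibit a bounded two-sided inverse constructed via a Green's function built from the two explicit fundamental solutions $S^1_0$ and $S^2_0$. The key preliminary observation is the asymptotic analysis at infinity: since $\dot\phi/\phi\to\mathrm{sgn}(s)$ while $\phi^{-(2n-2)}$ decays exponentially, $T_0$ is asymptotic to the constant-coefficient ODE $\partial_s^2 \pm(n-2)\partial_s$ whose indicial roots at $\pm\infty$ are $0$ and $\mp(n-2)$; hence any solution of the homogeneous equation is, at each end, asymptotic to a linear combination of a constant and of $\phi^{-(n-2)}$.

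For injectivity, $\ker T_0 = \langle S^1_0, S^2_0\rangle$ and direct inspection of \eqref{eq:defJtrans}--\eqref{eq:defJ0} gives $S^1_0\to\pm 1$ at $s\to\pm\infty$ (odd function), while $S^2_0\to c_n:=\int_0^{\infty}\phi^{2-n}$ at \emph{both} ends (even function). The leading $\phi^0$-coefficients of $aS^1_0+bS^2_0$ at $\pm\infty$ are $\pm a+bc_n$, whose simultaneous vanishing forces $a=b=0$. Since $q<0$ requires elements of $\mathscr{C}^k_q$ to have vanishing leading mode at each end, this yields $\ker(T_0|_{\mathscr{C}^k_q})=\{0\}$.

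For surjectivity, I would introduce $u_\pm := S^2_0 \mp c_n S^1_0$: the asymptotic analysis shows that $u_+$ decays like $\phi^{-(n-2)}$ at $+\infty$ while remaining bounded (with limit $2c_n$) at $-\infty$, and symmetrically for $u_-$. Abel's formula applied to $T_0$ gives $W(u_+,u_-)(t)=W_0\,\phi^{-(n-2)}(t)$ with $W_0\neq 0$. Given $f\in\mathscr{C}^{k-2}_q$, I would define
\[
(\mathscr{K}f)(s) := -\frac{u_+(s)}{W_0}\int_{-\infty}^{s}\phi^{n-2}(t)\,u_-(t)\,f(t)\,dt \,-\, \frac{u_-(s)}{W_0}\int_s^{+\infty}\phi^{n-2}(t)\,u_+(t)\,f(t)\,dt,
\]
which solves $T_0(\mathscr{K}f)=f$ by a direct variation-of-constants computation. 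To verify $\mathscr{K}f\in\mathscr{C}^k_q$, one splits each integral at $t=s$ and uses the bounds $|u_\pm(t)|\leq C\min(1,\phi^{-(n-2)}(t))$ at the relevant end, together with $|f(t)|\leq \|f\|_{\mathscr{C}^0_q}\,\phi^q(t)$. A routine calculation (exploiting $-(n-2)<q<0$, so that $\phi^{n-2+q}$ grows and $\phi^q$ decays at integrable rates) yields $|(\mathscr{K}f)(s)|\leq C\,\phi^q(s)\,\|f\|_{\mathscr{C}^0_q}$, with $C$ depending only on $n,q$. Higher-order estimates up to order $k$ follow by using the equation $T_0(\mathscr{K}f)=f$ to bootstrap from the $\mathscr{C}^0_q$-bound.

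The main obstacle is the weight-matching computation in the previous paragraph: the inequality $-(n-2)<q<0$ is precisely the non-critical condition that keeps the weight strictly between the two indicial roots at each end, making the Green's-function integrals converge and producing output in the target space with the correct norm bound. A cleaner but less self-contained alternative would be to invoke the Lockhart--McOwen Fredholm theory for asymptotically translation-invariant elliptic operators on cylindrical manifolds to conclude directly that $T_0$ is Fredholm of index zero for all non-critical weights $q$, and then combine this with the triviality of the kernel from the injectivity step to conclude.
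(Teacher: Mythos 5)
Your argument is correct and is essentially the paper's: both rest on variation of constants with the explicit homogeneous solutions $S^1_0,S^2_0$, using that $-(n-2)<q<0$ places the weight strictly between the indicial roots $0$ and $\mp(n-2)$ at the two ends so the resulting integrals converge and give decay $O(\phi^q)$. The only difference is organizational: you package the inverse as a two-sided Green's function built from the combinations $u_\pm$, whereas the paper determines the two integration constants $A,B$ from a $2\times 2$ linear system (with determinant $2T>0$) imposed by the decay requirements at $\pm\infty$ and then bounds each term of the resulting expression by $e^{-(n-2)|s|}$, $e^{-2(n-1)|s|}$ or $e^{q|s|}$ — the two formulations are equivalent.
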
	
            		
            			\begin{proof}
            				First of all, one can ckeck that the functions $\left\{S^{1}_0(s), S^2_0(s) \right\}$ are linearly independent at each point $s\in\mathbb{R}$, for indeed the associated Wronskjan determinant is
            				\[
            				W_0(s)=\frac{n-1}{\cosh^{\frac{n-2}{n-1}}((n-1)s)}
            				\]
            				and thus all solutions to the ordinary differential equation $T_0 S= f_0$ can be written (by the method of variation of constants) in the form
            				\[
            				S(s)= S^{1}_{0}(s)\left[A-\int_{0}^{s} f_0(\s)W_{0}^{-1}(\s)S^2_0(\s)\,d\s \right]+S^2_0(s) \left[B+\int_0^{s}f_0(\s)W_0^{-1}(\s)S^1_0(\s)\,d\s\right]
            				\]
            				as $A, B$ vary in $\mathbb{R}$. We claim that, given $f_0\in\mathscr{C}^{k-2}_{q}(\mathbb{R})$ there is a unique choice of those parameters such that $S(s)\to 0$ as one lets $s\to\pm\infty$. To see this, one needs to rewrite the same solution in the (somewhat complicated) form
            				\begin{align*}
            				S(s) & =  A S^{1}_0(s)+B S^{2}_0(s) + \frac{S^{1}_0(s)}{n-1}\int_{0}^{+\infty}f_0(\s)(1+\omega(\s)\phi^{n-2}(\s)S^1_0(\s))\,d\s \\
            				& -\frac{S^1_0(s)}{n-1}\int_{s}^{+\infty}f_0(\s)\,d\s-\frac{\phi^{2-n}(s)}{n-1}\int_{0}^{s}f_0(\s)\phi^{n-2}(\s)S^1_0(\s)\,d\s \\
            				& -\frac{S^1_0(s)}{n-1}\int_{s}^{+\infty}f_0(\s)\omega(\s)\phi^{n-2}(\s)S^1_0(\s)\,d\s -\frac{S^1_0(s)}{n-1} \omega(s)\int_0^{s}f_0(\s)\phi^{n-2}(\s)S^1_0(\s)\,d\s
                        				\end{align*}
            				
            				where we have set (for the sake of brevity)
            				\[
            				\omega(s)=\int_{s}^{+\infty}\phi^{2-n}(\s)\,d\s.
            				\]
            				Such formula for $S(s)$ can be obtained, via some algebraic manipulations, observing that $S^2_0(s)=\psi(s)S^1_0(s)-1/\phi^{n-2}(s)$ and using the relation 
            				\[
            				\int_{\sigma_1}^{\sigma_2}\phi^{2-n}(\tau)\,d\tau = \omega(\sigma_1)-\omega(\sigma_2).
            				\]
            				At that stage, it is immediately apparent that $S(+\infty)=0$ if and only if 
            				\[
            				A+BT+I^{+}_f+\tilde{I}^{+}_f =0
            				\]
            				for
            				\[
            				T=\int_{0}^{+\infty}\phi^{2-n}(\s)\,d\s, \ \ I^{+}_f=\frac{1}{n-1}\int_{0}^{+\infty}f_0(\s)\,d\s, \ \ \tilde{I}^{+}_{f}=\frac{1}{n-1}\int_{0}^{+\infty}f_0(\s)\omega(\s)\phi^{n-2}(\s)S^1_0(\s)\,d\s
            				\]
            				which are finite under the assumption that $f_0\in\mathscr{C}^{k-2}_{q}(\mathbb{R})$ for $q<0$.
            				An analogous analysis for $s\to-\infty$ leads to the conclusion that $S(-\infty)=0$ if and only if a condition of the form $-A+BT+I^{-}_{f_0}+\tilde{I}^{-}_{f_0}=0$ holds, so that in the end the parameters $A$ and $B$ are the unique solution of $2\times 2$ linear system
            				\[
            				\begin{cases}
            				A+BT+I^{+}_{f_0}+\tilde{I}^{+}_{f_0} =0 \\
            				-A+BT+I^{-}_{f_0}+\tilde{I}^{-}_{f_0}=0
            				\end{cases}
            				\]
            				whose determinant equals $2T>0$.
            				On the other hand, once such choice is made the formulas provided above ensure that each term of $S(s)$ has a pointwise bound of the form $e^{-(n-2)|s|}$ or $e^{-2(n-1)|s|}$ or $e^{q|s|}$ so that, due to the assumption $-(n-2)<q<0$ the conclusion easily follows.
            			\end{proof}	
            			
            			\
            		
            					\begin{remark}
            						Notice that for $n=2$ the conclusion of Lemma \ref{lem:leq0} is \textsl{vacuous}, namely there is no value of $q$ for which $T$ is proven to be an isomorphism. In fact, we have seen in the first part of this article that an existence theorem cannot possibly hold true in ambient dimension three.
            					\end{remark}

            	\bibliographystyle{plain}

            	\end{document}